\documentclass[a4paper,10pt]{article}
\usepackage{amssymb}
\usepackage{amsmath,amsfonts,amsthm,amssymb}
\usepackage[dvips]{graphics}
\usepackage{epsfig}
\usepackage{indentfirst}
\usepackage{cancel,soul}

\usepackage{color}
\usepackage{bm}
\textheight 21.2 true cm
\textwidth  14.8 true cm
\oddsidemargin=0.4cm
\allowdisplaybreaks


\usepackage[colorlinks=true,citecolor=red,linkcolor=blue,%
urlcolor=RubineRed,pdfpagetransition=Blinds,pdftoolbar=false,pdfmenubar=false]{hyperref}

\newtheoremstyle{theorem}
  {10pt}          
  {10pt}  
  {\sl}  
 {}
  {\bf}  
  {. }    
  { }    
  {}     
\theoremstyle{theorem}

\newtheorem{theorem}{Theorem}[section]

\newtheorem{definition}{Definition}[section]
 
 \newtheorem{proposition}{Proposition}[section]
 \newtheorem{remark}{Remark}[section]
  
\numberwithin{equation}{section}

\newtheoremstyle{defi}
  {10pt}          
  {10pt}  
  {\rm}  
  {}  
  {\bf}  
  {. }    
  { }    
  {}     
\theoremstyle{defi}



\begin{document}
\baselineskip = 16pt

\title{\bf Energy Conservation and Vanishing Viscosity Limit for the Primitive Equations}

\author{\v{S}\'{a}rka Ne\v{c}asov\'{a}$^1$
\footnote{Email: matus@math.cas.cz} \ \ \
Tong Tang$^{2}$ \footnote{Email: tt0507010156@126.com}\\
Emil Wiedemann $^{3}$\footnote{Email: emil.wiedemann@fau.de}\\
Lu Zhu $^{4}$\footnote{Email: zhulu@hhu.edu.cn}\\
{\small  1. Institute of Mathematics of the Academy of Sciences of the Czech Republic,} \\
{\small \v Zitn\' a 25, 11567, Praha 1, Czech Republic}\\
{\small 2. School of Mathematical Science}\\
{\small Yangzhou University, Yangzhou 225002, P.R. China}\\
{\small 3. Department of Mathematics}\\
{\small Friedrich-Alexander-Universit\"at Erlangen-N\"{u}rnberg, Cauerstr.\ 11,
91058 Erlangen, Germany}\\
{\small 4. School of Mathematics}\\
{\small Hohai University, Nanjing, China}\\
\date{}}

\maketitle

\begin{abstract}
In this paper, we consider the problem of energy conservation for weak solutions of the inviscid Primitive Equations (PE) in a bounded domain. Based on the work [Bardos et al., Onsager's conjecture with physical boundaries and an application to the vanishing viscosity limit, Comm. Math. Phys., 2019, 291-310], we prove the energy conservation for PE with boundary condition under suitable Onsager-type assumptions. But due to the special structure of PE system and its domain, some new challenging difficulties arise: the lack of information about the vertical velocity, and existing corner points in the domain. We introduce some new ideas to overcome the above obstacles. As a byproduct, we give a sufficient condition for absence of anomalous energy dissipation in the vanishing viscosity limit.

\vspace{0.5cm}

{{\bf Key words:} energy conservation, Primitive Equations, Onsager's conjecture, vanishing viscosity limit.}

\medskip

{ {\bf 2010 Mathematics Subject Classifications}: 35Q35, 35Q86.}
\end{abstract}

\maketitle
\section{Introduction}\setcounter{equation}{0}
It is needless to emphasize the importance of atmospheric and ocean flows both in science and everyday life. From the mathematical and numerical points of view however, it is very complicated to use the full hydrodynamic and thermodynamic equations to describe the motion of geophysical fluids and thus describe meteorological and oceanographic phenomena. As a simplified model, therefore, Primitive Equations (PE) were introduced in geophysical fluid dynamics. As a model for long-term weather prediction, the PE system has been extensively studied theoretically, numerically, and experimentally. It is derived from the Navier-Stokes equations or Euler equations using the Boussinesq approximation and the hydrostatic approximation. In this paper, we investigate the inviscid Primitive Equations (PE) system:
\begin{equation}\label{1.1}
\left\{
\begin{array}{llll}
\partial_{t}\mathbf u+\text{div}_{\bf x}(\mathbf{u}\otimes\mathbf{u})+\partial_z(\mathbf u w)+\nabla_{\bf x}p=0, \\
\partial_zp=0,\\
\text{div}_{\bf x}\mathbf u+\partial_zw=0,
\end{array}\right.
\end{equation}
where $\mathbf u(t,{\bf x},z)\in\mathbb{R}^2$, $w(t,{\bf x},z)\in\mathbb{R}$, and $p$ represent the horizontal velocity, vertical velocity, and pressure respectively.
We denote the velocity as $\mathbf{U}=(\mathbf u,w)$ and the space variable as ${\bf X}=({\bf x}, z)$,
where ${{\bf x} \in \mathbb{R}^2}$ denotes the horizontal and $z$ the vertical direction. The domain is a cylinder, that is
\begin{align*}
\Omega=\{S\times[0,1]:x\in S, z\in[0,1]\},
\end{align*}
where $S\subseteq\mathbb{R}^2$ is a bounded domain with $\partial S\in C^2$. The boundary is $\partial\Omega=\Gamma_b\cup\Gamma_t\cup\Gamma_s$, with
\begin{align}\label{1.2}
&\text{the bottom}\hspace{5pt}\Gamma_b=\{({\bf x},z), \mathbf x\in S,z=0\},\nonumber\\
&\text{the top}\hspace{5pt}\Gamma_t=\{({\bf x},z), \mathbf x\in S,z=1\},\nonumber\\
&\text{the sidewall}\hspace{5pt}\Gamma_s=\{({\bf x},z), \mathbf x\in\partial S,0<z<1\}.
\end{align}
We assume the slip condition for $\mathbf U$, that is
\begin{align}\label{1.3}
\mathbf U\cdot\mathbf N|_{\partial\Omega}=0,
\end{align}
where $\mathbf N$ is the unit outward normal vector to the boundary. On the bottom and top, this condition translates to $w|_{z=0,1}=0$, while on the sidewall the boundary condition is $\mathbf u\cdot\mathbf n|_{\Gamma_s}=0$, where $\mathbf n$ is the normal vector to $\partial S$.

For the three-dimensional incompressible Euler equations, well-posedness is known only locally in time and in function spaces of high regularity (essentially anything better than $C^1$).
Evidently, the conservation of energy for the Euler equations is valid for such smooth (or strong) solutions. In contrast, weak solutions, which are better suited to describe turbulent flows, need not conserve the energy. This is phenomenologically and experimentally expected in the study of turbulence. In terms of rigorous analysis, the Scheffer-Shnirelman \cite{sc,sh} construction yields a first counterexample to energy conservation (and uniqueness) of weak solutions of Euler. It is therefore interesting to investigate suitable additional conditions on the velocity field for weak solutions such as to obtain energy conservation.

Lars Onsager formulated a famous conjecture when in his work on turbulence. He conjectured that the {incompressible Euler equations} conserve energy if the velocity
$\mathbf u\in L^3((0,T);C^{0,\alpha}(\mathbb{R}^3))$ with $\alpha>\frac{1}{3}$, and may dissipate energy below this threshold. Both parts of Onsager's conjecture have been a lively direction of recent research. The first (conservative) part was proved by Constantin, E and Titi in \cite{co} (also by  Eyink \cite{ey}, and locally in the work of Duchon and Robert \cite{du}), stating that if ${\bf u}$ belongs to the Besov space $L^3([0,T];B^{\alpha,\infty}_3(\mathbb{T}^3))\cap C([0,T];L^2(\mathbb{T}^3))$ with $\alpha>\frac{1}{3}$,
then the energy is conserved. Cheskidov et al~ \cite{ch} and Fjordholm and Wiedemann \cite{f} sharpened this result to optimal Besov-type spaces at the critical fractional differentiability exponent $1/3$. Similar results were obtained for the incompressible {\it inhomogeneous} and also the compressible isentropic Euler equations by Feireisl et al.~\cite{e4} (see also~\cite{lsh}).

The second part of the conjecture says there exist weak solutions of the Euler equation for $\alpha<\frac{1}{3}$ which do not conserve energy. This part has been solved by by Isett \cite{i}, Buckmaster et. al. \cite{bu}, relying on a series of breakthrough papers of De Lellis and Sz\'{e}kelyhidi \cite{de1,de2}, who developed the so-called convex integration technique in the realm of fluid dynamics.

Regarding the PE system, the mathematical study of well-posedness was initiated in the 1990s by J.-L.~Lions, Temam, and Wang \cite{l1,l2}. Then Guill\'en-Gonz\'alez, Masmoudi, and Rodr\'{\i}guez-Bellido \cite{gu} proved the local existence of strong solutions and their uniqueness using anisotropic estimates. A key milestone was achieved by Cao and Titi \cite{c1}, who proved the global well-posedness of PE in the three dimensional case. Then, by virtue of semigroup methods, Hieber and Kashiwabara \cite{h} improved this result by relaxing the smoothness of the initial data. Recently, Li and Liang \cite{li} proved the global well-posedness of PE with free boundary condition. However, to date there have been no results regarding energy conservation for PE system save those by Boutros, Markfelder, and Titi \cite{bo}, who proposed three types of weak solution, and in each case showed the energy conservation for the incompressible inviscid PE in $\mathbb{T}^3$.

However, we should emphasize that all the above results about energy conservation concern on the whole space or the torus, where there are no physical boundaries. When passing from local to global energy conservation, it is hard to control the regularity of the velocity field near the boundary. Therefore, it is a natural question to ask whether we can obtain an Onsager-type result on a bounded domain with slip boundary condition.
For the incompressible Euler equations, the first result in the special case of a half-space is due to Robinson et al.~\cite{ro} via a reflection technique. On general bounded domains, the problem was tackled by Bardos, Titi and Wiedemann \cite{b2}, who also give an application to the vanishing viscosity limit. The basic strategy used there \cite{b2} is localization and regularity of the pressure via elliptic theory. Thus, in order to obtain the global energy conservation, one needs first to get the local version and in addition suitable estimates near the boundary. See also Drivas and Nguyen \cite{dr} for related results.

In the context of geophysical fluid dynamics, the boundary condition is necessary and essential. Based on the work of \cite{b2}, we will investigate the energy conservation for PE with boundary condition. Some techniques in this paper are inspired by methods introduced in \cite{b2}. However, their extension to the PE system is non-standard, since PE offers a strikingly different picture. The increased difficulty of PE compared to Euler is at least twofold: First, there is a lack of information on the vertical velocity in the momentum equation. Secondly, the boundary in \cite{b2} is assumed $C^2$, which is an essential ingredient in the proof. However, the domain in the current note is a cylinder, which exhibits edges at the top and the bottom. It is therefore evident that the method used in \cite{b2} can not be applied directly to the PE system. Inspired by \cite{bo}, who work in anisotropic Besov spaces, we use an anisotropic H\"{o}lder space to solve the first difficulty. For the second difficulty, we construct an approximate interior domain (see Figure 2) with $C^2$ boundary, and obtain the corresponding result, then pass to the limit. {To the authors' knowledge, our work is the first result about energy conservation for PE system with boundary condition.}

Unlike most systems in fluid dynamics, for PE there is no canonical way to define weak solutions -- this is why Boutros et al.~\cite{bo} propose three different notions. Here, we choose the simplest one for convenience. We believe it would be possible to work with the other two notions as well to achieve results comparable with those presented here. Also, the choice of H\"older instead of Besov spaces is merely for simplicity and convenience.

The paper is organized as follows:
In Section \ref{S2}, we introduce notation and give some useful propositions, then state the main theorems.
Section \ref{S3} is devoted to the proof of local energy conservation. Section \ref{S3} focuses on the global energy conservation. Finally, Section \ref{S5} shows the result on the vanishing viscosity limit.
\vskip 0.5cm

\section{Preliminaries} \label{S2}

\subsection{Definition of weak solutions for PE }
\begin{definition}

A triple of $(\mathbf u, w, p)$ is called a \emph{weak solution} to the PE system \eqref{1.1} if

\noindent
$\bullet$  $\mathbf u\in L^2((0,T);L^2(\Omega))$, $w\in L^2((0,T);L^2(\Omega))$, $p\in \mathcal{D}'((0,T)\times\Omega)$;

\noindent
$\bullet$
all the equations are satisfied in the sense of distributions, i.e.
\begin{align}
&\int^T_0\int_{\Omega}\mathbf u\partial_t\mathbf\Psi d\mathbf xdzdt+\int^T_0\int_{\Omega}\mathbf{u}\otimes\mathbf{u}:\nabla_{\mathbf x}\mathbf\Psi d\mathbf xdzdt\nonumber\\
&\hspace{5pt}+\int^T_0\int_\Omega\mathbf uw\partial_z\mathbf\Psi d\mathbf xdzdt
+\int^T_0\int_\Omega p\operatorname{div}_{\mathbf x}\mathbf\Psi d\mathbf xdzdt=0,\label{2.1}\\
&\int^T_0\int_\Omega p\partial_z\psi d\mathbf xdzdt=0,\label{2.2}\\
&\int^T_0\int_\Omega (\mathbf u\nabla_{\mathbf x}\psi+w\partial_z\psi )d\mathbf xdzdt=0,\label{2.3}
\end{align}
for all test functions $\Psi\in \mathcal{D}((0,T)\times\Omega;\mathbb{R}^3)$, $\psi\in \mathcal{D}((0,T)\times\Omega;\mathbb{R})$.
\end{definition}

\begin{remark}
Chiodaroli and Mich\'{a}lek \cite{chi} used convex integration to show the existence of infinitely many global weak solutions for inviscid PE on a bounded domain for certain initial data.

\end{remark}

\subsection{Notation}

Here we adopt a framework similar as in~\cite{b2}. Although rigorously meaningless, we sometimes attach a subscript to $\mathbb{R}^m$ to clarify which variable is being referred to, for instance $\mathbb{R}^3_{\mathbf X}$ or $\mathbb{R}^4_{t,\mathbf X}=\mathbb{R}_t\times\mathbb{R}^3_{\mathbf X}$.

For an open set $\widetilde{Q}\subset\mathbb{R}^m$ and a given test function $\Psi\in\mathcal{D}(\widetilde{Q})$, we know $\Psi$ is compactly supported in $\widetilde{Q}$ by definition and denote its support by $S_\Psi$. Then we can introduce the following open sets by choosing sufficiently small $\eta>0$:
\begin{align}\label{2.4}
&S_\Psi\subset\subset Q_3\subset\subset Q_2\subset\subset Q_1\subset\subset \widetilde{Q},\nonumber\\
&d(S_\Psi,\mathbb{R}^m\setminus Q_3)>\eta,\hspace{3pt}d(Q_3,\mathbb{R}^m\setminus Q_2)>\eta,\hspace{3pt}
d(Q_2,\mathbb{R}^m\setminus Q_1)>\eta,\hspace{3pt}d(Q_1,\mathbb{R}^m\setminus \widetilde{Q})>\eta,\hspace{3pt}
\end{align}
where $d(A,B)$ means the distance between two nonempty sets $A$ and $B$. Next,
consider a smooth function $I_{2,\eta}\in\mathcal{D}(\mathbb{R}^m)$ such that
\begin{equation*}
I_{2,\eta}(z)=
\left\{
\begin{array}{llll}
1, \hspace{5pt} z\in Q_2,\\
0, \hspace{5pt} z\notin Q_1.
\end{array}\right.
\end{equation*}
For any distribution $T\in\mathcal{D}(\widetilde{Q})$, we can consider the distribution $\overline{T}:=I_{2,\eta}T$. Note that this extension procedure is contingent on the choices of $\Psi$ and $I_{2,\eta}$.

The following is trivial, but a detailed proof can be consulted from~\cite[Proposition 3.2]{b2} anyway:


\begin{proposition}\label{pro2.1} [\cite[Proposition 3.2]{b2}]

For a scalar or tensor-valued function $h\in L^p(Q_1)$ $(1\leq p\leq\infty)$ and for any $f\in C^\infty$ satisfying $f(0)=0$ and
$|f(\xi)|\leq C(1+|\xi|^p)$, write $\overline{f}:=I_{2,\eta}f$.

Then, if $\Psi\in\mathcal{D}(\mathbb{R}^m)$ with support in $Q_3$,
\begin{align}\label{2.5}
\langle\langle\overline{f}(h),\Psi\rangle\rangle=\langle\langle f(\overline{h}),\Psi\rangle\rangle.
\end{align}
Furthermore, for any multi-order derivative $D^\alpha$,
\begin{align}\label{2.6}
\langle\langle D^\alpha\overline{f(h)},\Psi\rangle\rangle=\langle\langle D^\alpha f(\overline{h}),\Psi\rangle\rangle.
\end{align}
Finally, for every $\sigma>0$ small enough such that
\begin{align*}
0<\sigma<\frac{\eta}{2}<\frac{d(Q_3,\mathbb{R}^m\setminus\overline{Q}_2)}{2},
\end{align*}
one has
\begin{align}\label{2.7}
\langle\langle \rho_\sigma\star\overline{f(h)},\Psi\rangle\rangle=\langle\langle\rho_\sigma\star f(\overline{h}),\Psi\rangle\rangle.
\end{align}
\end{proposition}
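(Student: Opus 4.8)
The plan is to derive all three identities from a single, essentially pointwise, observation: the localized nonlinear quantity $I_{2,\eta}\,f(h)$ and the nonlinearity evaluated on the localized function, $f(I_{2,\eta}h)$, are literally the same $L^1$ function on the open set where $I_{2,\eta}\equiv1$, a set that contains $Q_2\supset Q_3$; everything else is formal duality (for the derivative and the convolution) combined with support tracking. Before comparing them, I would check that both objects are well-defined elements of $L^1(\mathbb{R}^m)$ so that the pairings in \eqref{2.5}--\eqref{2.7} make sense. Since $h\in L^p(Q_1)$ and $|f(\xi)|\le C(1+|\xi|^p)$, the composition satisfies $|f(h)|\le C(1+|h|^p)\in L^1(Q_1)$; multiplying by the smooth cutoff $I_{2,\eta}$, supported in $Q_1$, and extending by zero gives $I_{2,\eta}f(h)\in L^1(\mathbb{R}^m)$. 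Likewise $\overline h:=I_{2,\eta}h\in L^p(\mathbb{R}^m)$, and because $f(0)=0$ we have $f(\overline h)=0$ wherever $\overline h=0$, in particular outside $Q_1$; with the same growth bound this yields $f(\overline h)\in L^1(\mathbb{R}^m)$. The hypothesis $f(0)=0$ is precisely what lets the nonlinearity commute with extension by zero.

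To obtain \eqref{2.5} I would compare the two integrands on the support of the test function. On $Q_2$ the cutoff satisfies $I_{2,\eta}\equiv1$, so there $I_{2,\eta}f(h)=f(h)$ and, simultaneously, $\overline h=h$, whence $f(\overline h)=f(h)$. Thus $I_{2,\eta}f(h)=f(\overline h)$ a.e.\ on $Q_2$. Since $\Psi$ is supported in $Q_3\subset\subset Q_2$ by \eqref{2.4}, the two pairings only see these common values and \eqref{2.5} follows. In fact the two functions coincide on the whole set $\{I_{2,\eta}=1\}\supseteq Q_2$, and this slightly stronger version---valid for any test function supported in $Q_2$---is what I will invoke for \eqref{2.7}.

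Identity \eqref{2.6} is then immediate from the definition of the distributional derivative: $\langle\langle D^\alpha\overline{f(h)},\Psi\rangle\rangle=(-1)^{|\alpha|}\langle\langle\overline{f(h)},D^\alpha\Psi\rangle\rangle$, and since $D^\alpha\Psi$ is again a test function supported in $Q_3$, applying \eqref{2.5} with $D^\alpha\Psi$ in place of $\Psi$ and undoing the integration by parts on the right gives \eqref{2.6}. For \eqref{2.7} I would move the mollifier onto the test function, writing $\langle\langle\rho_\sigma\star T,\Psi\rangle\rangle=\langle\langle T,\tilde\rho_\sigma\star\Psi\rangle\rangle$ with $\tilde\rho_\sigma(\cdot)=\rho_\sigma(-\cdot)$. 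The only genuine point is support tracking: $\tilde\rho_\sigma\star\Psi$ is supported in the $\sigma$-neighborhood of $\operatorname{supp}\Psi\subset Q_3$, and the smallness condition $\sigma<\eta/2$ together with $d(Q_3,\mathbb{R}^m\setminus Q_2)>\eta$ from \eqref{2.4} guarantees this neighborhood still lies inside $Q_2$. Hence $\tilde\rho_\sigma\star\Psi$ is a legitimate test function on which the $Q_2$-version of \eqref{2.5} applies, and transferring the mollifier back yields \eqref{2.7}.

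The main---and really the only---obstacle here is bookkeeping rather than analysis: one must ensure the nonlinear compositions are integrable (which is where $f(0)=0$ and the $p$-growth bound enter) and that the convolution does not push the test function out of the region where $I_{2,\eta}=1$ (the role of the explicit threshold on $\sigma$). No estimate is lost and no regularity of $h$ beyond $L^p$ is needed, consistent with the paper's remark that the statement is essentially trivial.
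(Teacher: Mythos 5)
Your argument is correct and is exactly the standard one the paper has in mind: the paper itself omits the proof, calling the statement trivial and deferring to \cite[Proposition 3.2]{b2}, whose argument is precisely this combination of the pointwise identity $I_{2,\eta}f(h)=f(I_{2,\eta}h)$ on $\{I_{2,\eta}=1\}\supseteq Q_2$, duality for $D^\alpha$ and $\rho_\sigma\star{}$, and support tracking under the smallness condition on $\sigma$. Nothing is missing.
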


\emph{Mutatis mutandis}, we can consider a similar setting in time-space cylindrical domains. Let $\widetilde{Q}=(t_1,t_2)\times\widetilde{\Omega}\subset\subset(0,T)\times\Omega$. For a given function $\psi\in \mathcal{D}(\widetilde{Q})$ with support contained in $(t_a,t_b)\times \Omega_\psi$, we choose subsets such that, for appropriate $\eta>0$,
\begin{align}\label{2.8}
&\Omega_\Psi\subset\subset \Omega_3 \subset\subset \Omega_2\subset\subset \Omega_1\subset\subset \widetilde{\Omega}\subset\subset\Omega,\nonumber\\
&d(\Omega_\Psi,\Omega\setminus \Omega_3)>\eta,\hspace{3pt}d(\Omega_3,\Omega\setminus \Omega_2)>\eta,\hspace{3pt}
d(\Omega_2,\Omega\setminus \Omega_1)>\eta,\hspace{3pt}d(\Omega_1,\Omega\setminus \widetilde{\Omega})>\eta.
\end{align}
Moreover one can choose $\tau>0$ small enough such that Proposition \ref{pro2.1} can be applied to the open sets
\begin{align}\label{2.9}
(t_a,t_b)\times \Omega_\Psi&\subset\subset Q_3=(t_1+3\tau,t_2-3\tau)\times\Omega_3\subset\subset Q_2=(t_1+2\tau,t_2-2\tau)\times\Omega_2\nonumber\\
&\subset\subset Q_1=(t_1+\tau,t_2-\tau)\times\Omega_1\subset\subset \widetilde{Q}=(t_1,t_2)\times\widetilde{\Omega}.
\end{align}
Similarly as before, any distribution $T\in\mathcal{D}'(\widetilde{Q})$ is extended as a distribution in $\mathcal{D}'(\mathbb{R}_t\times\mathbb{R}^3_{\mathbf X})$ by $\overline{T}=I_{2,\sigma}T$ with $I_{2,\sigma}(\mathbf X,t)=I_{2,\tau}(t)I_{2,\eta}(\mathbf X)$, where $I_{2,\tau}\in\mathcal{D}(\mathbb{R}_t)$ and $I_{2,\eta}\in\mathcal{D}(\mathbb{R}^3_{\mathbf X})$ satisfy
\begin{equation*}
I_{2,\tau}(t)=
\left\{
\begin{array}{llll}
1, \hspace{5pt} t\in (t_1+2\tau,t_2-2\tau),\\
0, \hspace{5pt} t\notin (t_1+\tau,t_2-\tau),
\end{array}\right.
\end{equation*}
and
\begin{equation*}
I_{2,\eta}(\mathbf X)=
\left\{
\begin{array}{llll}
1, \hspace{5pt} \mathbf X\in \Omega_2,\\
0, \hspace{5pt} \mathbf X\notin \Omega_1.
\end{array}\right.
\end{equation*}
Moreover, for $\kappa\in(0,\frac{\tau}{2})$, $\epsilon\in(0,\frac{\eta}{2})$ and $\sigma=(\kappa,\epsilon)$ we introduce the mollifiers $\rho_\sigma$ as
\begin{align}\label{2.10}
\rho_\sigma(t,\mathbf X)=\rho_{\kappa,\epsilon}(t,\mathbf X)=\frac{1}{\kappa}\rho\left(\frac{|t|}{\kappa}\right)\frac{1}{\epsilon^3}\rho\left(\frac{|\mathbf X|}{\epsilon}\right),
\end{align}
where $\rho\in \mathcal{D}(\mathbb{R})$ is a standard mollifier.

Next, for $f\in\mathcal{D}'(\mathbb{R}^3_{\mathbf X})$ and $F\in\mathcal{D}'(\mathbb{R}_t\times\mathbb{R}^3_{\mathbf X})$, we use the notation
\begin{align}\label{2.11}
(f)^\epsilon=\rho_\epsilon\star f,\hspace{5pt} (F)^\sigma=\rho_\sigma\star F=\rho_\kappa\star_t\rho_\epsilon\star_{\mathbf X}F.
\end{align}

As $\partial S\in C^2$, there exists a neighbourhood of $\partial S$ which enjoys the \emph{unique nearest point property}~\cite{foote}, that is, for every $\bf x\in S$ sufficiently close to the boundary, there exists a unique point $\bm{\sigma}(\bf x)\in\partial S$ such that
\begin{equation*}
|\bm{\sigma}(\bf x)-\bf x|=\inf_{\bf y\in\partial S}|\bf x-\bf y|.
\end{equation*}
By abuse of notation, for $\bf X\in \Omega$ sufficiently close to $\partial S\times [0,1]$ we sometimes also write $\bf\sigma(\bf X)$ to denote the unique closest point to $\bf X$ in $\partial S\times [0,1]$.

Finally, we introduce the anisotropic H\"{o}lder seminorm and define $(0<\alpha,\beta<1)$
\begin{align}\label{2.12}
\|\mathbf u\|_{C^{0,\alpha}_{\mathbf x}C^{0,\beta}_{z}}=\sup_{\xi_z\in\mathbb{R}\setminus\{0\}}\frac{|\mathbf u(\mathbf x,z+\xi_z)-\mathbf u(\mathbf x,z)|}{|\xi_z|^\beta}
+\sup_{\xi_h\in\mathbb{R}^2\setminus\{0\}}\frac{|\mathbf u(\mathbf x+\xi_h,z)-\mathbf u(\mathbf x,z)|}{|\xi_h|^\alpha}.
\end{align}

Now we are ready to show our results.


\subsection{Main results}
First, we give the local version of energy conservation for PE.

\begin{theorem}\label{t2.1}
Let $(\mathbf u, w,p)$ 
be a weak solution of PE \eqref{1.1} in an open subset $\widetilde{Q}=(t_1,t_2)\times\widetilde{\Omega}\subset\subset(0,T)\times\Omega$ that satisfies the following two conditions:

1. There exists $\gamma>0$ so that for $V_\gamma=\{\mathbf X\in\widetilde{\Omega}: d(\mathbf X,\partial\widetilde{\Omega})<\gamma\}$, there are $M_0(V_\gamma)>0$ and $\theta(V_\gamma)>0$ such that
\begin{align}\label{2.13}
p\in L^{\frac{3}{2}}\big{(}(t_1,t_2);H^{-\theta(V_\gamma)}(V_\gamma)\big{)}\leq M_0(V_\gamma)<\infty;
\end{align}

2. There exist $\alpha\in (1/2,1)$ and {$\beta\in (1/2,1)$ }such that $2\operatorname{min}\{\alpha,\beta\}+\operatorname{max}\{\alpha,\beta\}>2$ (the relation between $\alpha$ and $\beta$ can be seen in Figure 1) and $M(\widetilde{Q})>0$ such that
\begin{align}\label{2.14}
\int^{t_2}_{t_1}\|\mathbf u\|^3_{C^{0,\alpha}_{\mathbf x}C^{0,\beta}_{z}(\overline{\widetilde{\Omega}})}dt\leq M(\widetilde{Q})<\infty.
\end{align}
Then $(\mathbf u, w, p)$ satisfies in $\widetilde{Q}$ the local energy conservation
\begin{align*}
\partial_t\frac{|\mathbf u|^2}{2}+\operatorname{div}_{\bf x}\left( \left(\frac{|\mathbf u|^2}{2}+p\right)\mathbf u\right)+\partial_z\left(\left(\frac{|\mathbf u|^2}{2}+p\right)w\right)=0
\hspace{5pt}\text{in}\hspace{5pt}\mathcal{D}'((t_1,t_2)\times\widetilde{\Omega}).
\end{align*}

\end{theorem}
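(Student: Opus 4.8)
The plan is to run the Constantin--E--Titi mollification scheme, adapted to the bounded cylinder through the extension/localization machinery of Proposition \ref{pro2.1}. First I would fix a test function, use the cutoff $I_{2,\sigma}$ to extend $(\mathbf u,w,p)$ to all of $\mathbb R_t\times\mathbb R^3_{\mathbf X}$, and mollify at scale $\sigma=(\kappa,\epsilon)$. By Proposition \ref{pro2.1} the mollified quantities agree on the inner region $Q_3$ with the mollifications of the true nonlinearities, so on $Q_3$ the momentum equation holds classically,
\[
\partial_t\mathbf u^\sigma+\operatorname{div}_{\mathbf x}(\mathbf u\otimes\mathbf u)^\sigma+\partial_z(\mathbf u w)^\sigma+\nabla_{\mathbf x}p^\sigma=0 ,
\]
while the linear constraints mollify to $\operatorname{div}_{\mathbf x}\mathbf u^\sigma+\partial_z w^\sigma=0$ and $\partial_z p^\sigma=0$. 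Multiplying by $\mathbf u^\sigma$, collecting perfect derivatives and using the two constraints to cancel every term proportional to $\operatorname{div}_{\mathbf x}\mathbf u^\sigma+\partial_z w^\sigma$, I obtain on $Q_3$ the regularized balance
\[
\partial_t\tfrac{|\mathbf u^\sigma|^2}{2}+\operatorname{div}_{\mathbf x}\!\big((\tfrac{|\mathbf u^\sigma|^2}{2}+p^\sigma)\mathbf u^\sigma\big)+\partial_z\!\big((\tfrac{|\mathbf u^\sigma|^2}{2}+p^\sigma)w^\sigma\big)+\mathbf u^\sigma\!\cdot\operatorname{div}_{\mathbf x}r^\sigma_h+\mathbf u^\sigma\!\cdot\partial_z r^\sigma_z=0,
\]
with horizontal and vertical commutators $r^\sigma_h=(\mathbf u\otimes\mathbf u)^\sigma-\mathbf u^\sigma\otimes\mathbf u^\sigma$ and $r^\sigma_z=(\mathbf u w)^\sigma-\mathbf u^\sigma w^\sigma$.

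It then remains to let $\sigma\to0$. The engine of the commutator estimates is the anisotropic differentiation bound obtained by subtracting a partially shifted base point in the commutator identity: since $\int\partial_{y_i}\rho_\epsilon\,dy_i=0$ in each variable separately, a horizontal derivative only sees a horizontal increment and a vertical derivative only a vertical one, giving $\|\nabla_{\mathbf x}\mathbf u^\sigma\|_{L^\infty_{\mathbf X}}\lesssim\epsilon^{\alpha-1}$, $\|\partial_z\mathbf u^\sigma\|_{L^\infty_{\mathbf X}}\lesssim\epsilon^{\beta-1}$, and, for the mixed derivative (which is controlled by a second difference), $\|\nabla_{\mathbf x}\partial_z\mathbf u^\sigma\|_{L^\infty_{\mathbf X}}\lesssim\epsilon^{\max\{\alpha,\beta\}-2}$, all with $L^3$ integrability in time by \eqref{2.14}. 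Writing $\mathbf u^\sigma\!\cdot\operatorname{div}_{\mathbf x}r^\sigma_h$ as a perfect $\mathbf x$-divergence plus $-r^\sigma_h:\nabla_{\mathbf x}\mathbf u^\sigma$ and using $|r^\sigma_h|\lesssim\epsilon^{2\min\{\alpha,\beta\}}$, the horizontal commutator is $O(\epsilon^{2\min\{\alpha,\beta\}+\alpha-1})$ with the benign time pairing $L^{3/2}_t\cdot L^3_t=L^1_t$; it vanishes already under $\alpha,\beta>1/2$. The pressure flux is passed to the limit by duality, hypothesis \eqref{2.13} placing $p$ in $L^{3/2}((t_1,t_2);H^{-\theta})$ on the collar $V_\gamma$ where $I_{2,\sigma}$ varies, which pairs against $\mathbf u^\sigma\in L^3_tH^{\theta}$ ($\theta<\alpha$) and controls precisely the extension error created by differentiating $I_{2,\sigma}$.

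The main obstacle is the vertical commutator $\mathbf u^\sigma\!\cdot\partial_z r^\sigma_z$, where the features peculiar to PE concentrate. Its leading piece, after integration by parts, is $-\int r^\sigma_z\!\cdot\partial_z\mathbf u^\sigma\,\phi$, and two things fail if one argues naively: the a priori class $w\in L^2_{t,\mathbf X}$ makes $r^\sigma_z\sim\mathbf u\,w$ only $L^{6/5}$ in time, so $r^\sigma_z\,\partial_z\mathbf u^\sigma$ is not time-integrable, and $w$ carries no H\"older control, so its increments cannot be bounded directly. Both are cured by the constraint $\partial_z w=-\operatorname{div}_{\mathbf x}\mathbf u$ with $w|_{z=0}=0$, i.e. $w=-\int_0^z\operatorname{div}_{\mathbf x}\mathbf u\,ds$: this upgrades $w$ to $L^3$ in time (inherited from $\mathbf u$, at the cost of one horizontal derivative) and lets me rewrite a horizontal increment of $w$ as $\operatorname{div}_{\mathbf x}$ of a $z$-integral of a horizontal increment of $\mathbf u$. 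Integrating that $\operatorname{div}_{\mathbf x}$ by parts onto the mollified factor converts $\partial_z\mathbf u^\sigma$ into $\nabla_{\mathbf x}\partial_z\mathbf u^\sigma$, and the slowest-decaying resulting term is bounded by $\epsilon^{\alpha}\cdot\epsilon^{\min\{\alpha,\beta\}}\cdot\epsilon^{\max\{\alpha,\beta\}-2}$ (a $w$-increment $\epsilon^{\alpha}$, a $\mathbf u$-increment $\epsilon^{\min\{\alpha,\beta\}}$, and the mixed derivative $\epsilon^{\max\{\alpha,\beta\}-2}$). In the critical case $\alpha=\min\{\alpha,\beta\}$ this is $\epsilon^{\,2\min\{\alpha,\beta\}+\max\{\alpha,\beta\}-2}$, which is exactly what forces the scaling condition $2\min\{\alpha,\beta\}+\max\{\alpha,\beta\}>2$; the same $L^3_t$ upgrade of $w$ is also what renders the limiting vertical flux $(\tfrac{|\mathbf u|^2}{2}+p)w$ locally integrable.

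I expect the one genuinely delicate bookkeeping point to be arranging the vertical commutator so that no derivative ever lands on an unmollified increment (which is not differentiable): the $\operatorname{div}_{\mathbf x}$ coming from $w$ must be kept inside the convolution and integrated by parts only against mollified factors and the test function, with careful tracking of which increment is horizontal and which is vertical. Granting this, every commutator is $o(1)$ in $\mathcal D'$, and using $\mathbf u^\sigma\to\mathbf u$ locally uniformly, $w^\sigma\to w$ and $p^\sigma\to p$ to identify the flux and pressure terms, the regularized balance passes to the limit and yields the stated local energy equality in $\mathcal D'((t_1,t_2)\times\widetilde\Omega)$.
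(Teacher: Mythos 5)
Your overall strategy coincides with the paper's: the Constantin--E--Titi mollification combined with the localization/extension machinery of Proposition \ref{pro2.1}, cancellation of all terms proportional to $\operatorname{div}_{\mathbf x}\mathbf u^\sigma+\partial_z w^\sigma$, and --- crucially --- the PE-specific trick of writing $w=\int_0^z\operatorname{div}_{\mathbf x}\mathbf u\,ds$ and integrating the horizontal divergence by parts so that increments of $w$ are traded for horizontal H\"older increments of $\mathbf u$ at the cost of one power of $\epsilon$. Your exponent bookkeeping for both commutators is consistent with the paper's (the paper puts the transferred derivative on the mollifier, obtaining $|\overline w-(\overline w)^\epsilon|\lesssim\epsilon^{\alpha-1}$ and keeping $\partial_z(\psi(\overline{\mathbf u})^\epsilon)\lesssim\epsilon^{\beta-1}$, whereas you put it on the mollified test factor via $\nabla_{\mathbf x}\partial_z\mathbf u^\sigma\lesssim\epsilon^{\max\{\alpha,\beta\}-2}$; both give $\epsilon^{2\min\{\alpha,\beta\}+\max\{\alpha,\beta\}-2}$), and you correctly isolate the delicate point that no derivative may land on an unmollified increment.

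There is, however, one genuine gap: the pressure. The paper's proof rests on Proposition \ref{pressureest}, which upgrades $p$ from the hypothesis \eqref{2.13} to $L^{3/2}\big((t_1,t_2);C^{0,\alpha}_{\mathbf x}C^{0,\beta}_z(\overline{\Omega_2})\big)$ by vertically averaging the momentum equation, using the boundary condition to kill the $\partial_z(\mathbf u w)$ term, and applying elliptic regularity to $-\Delta_{\mathbf x}p=\operatorname{div}_{\mathbf x}\operatorname{div}_{\mathbf x}\int_0^1\mathbf u\otimes\mathbf u\,dz$. You replace this by a duality pairing of $p\in L^{3/2}_tH^{-\theta}$ against $\mathbf u^\sigma\in L^3_tH^{\theta}$ ``with $\theta<\alpha$'', but hypothesis \eqref{2.13} only asserts the existence of \emph{some} $\theta>0$, which may be large, so the pairing is not controlled uniformly in $\epsilon$; more importantly, without the H\"older upgrade the products $p\mathbf u$ and $pw$ appearing in the limiting flux are not even locally integrable (one cannot multiply an $H^{-\theta}$ distribution by $w\in L^2$), and the convergences $(\overline p)^\epsilon(\overline{\mathbf u})^\epsilon\to p\mathbf u$, $(\overline p)^\epsilon(\overline w)^\epsilon\to pw$ cannot be justified. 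The role of \eqref{2.13} is only to pin down the pressure as a genuine distribution on a collar so that the elliptic bootstrap applies; the bootstrap itself is the missing step in your argument, and without it the proof does not close.
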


Based on the local result, we will prove a global version. For this we need to consider, for sufficiently small $\eta>0$, the $\eta$-neighbourhood of the nonsmooth part of $\partial\Omega$:

\begin{equation*}
\Gamma_\eta:=\{\bm{X}\in\Omega: d(x,\overline{\Gamma_s}\cap(\Gamma_t\cup\Gamma_b))<\eta\}.
\end{equation*}

\begin{theorem}\label{t2.2}
Let $(\mathbf u, w,p)$ 
be a weak solution of PE \eqref{1.1} in $(0,T)\times\Omega$ satisfying the following hypotheses:

1. For some $\eta_0>0$ and $\eta>0$ sufficiently small,
\begin{align}
&p\in L^{\frac{3}{2}}\big{(}(0,T);H^{-\theta(V_{\eta_0})}(V_{\eta_0})\big{)},\label{2.15}\\
&\lim_{\eta\rightarrow0}\int^T_0\frac{1}{\eta}\left\{
\int_{[S_{\frac{5}{4}\eta}\setminus S_{\frac{3}{2}\eta}]
\times[0,1]}\left|\left(\frac{|\bm{u}|^2}{2}+p\right)\bm{u}\cdot
\bm{n}(\bm{\sigma}(\bm{X}))\right|
d\bm{X}+\int_{S\times I_\eta}
\left|\left(\frac{|\bm{u}|^2}{2}+p\right)w\right|
d\bm{X}
\right\}dt=0,\label{2.16}
\end{align}
where $S_{\eta}=\{\bm{x}\in S:\ {\rm dist}_{{\mathbb R}^2}(\bm{x},\partial S)
<\eta\}$, $I_{\eta}=(\eta,2\eta)\cup(1-2\eta,1-\eta)$, and $V_\eta$ is the $\eta$-neighbourhood of $\Omega$;

2. For every open set $\widetilde{Q}=(t_1,t_2)\times\widetilde{\Omega}\subset\subset(0,T)\times\Omega$, there exist $\alpha\in (1/2,1)$ and {$\beta\in (1/2,1)$ } such that $2\operatorname{min}\{\alpha,\beta\}+\operatorname{max}\{\alpha,\beta\}>2$ with the property
\begin{align}\label{2.17}
\int^{t_2}_{t_1}\|\mathbf u\|^3_{C^{0,\alpha}_{\mathbf x}C^{0,\beta}_{z}(\overline{\widetilde{\Omega}})}dt\leq M(\widetilde{Q})<\infty.
\end{align}

3. There exist some $\mu_1,\mu_2\in\mathbb{R}$ such that
\begin{align}\label{2.18}
&\|p\|_{L^\frac{3}{2}((0,T)\times\Gamma_\eta)}\leq C \eta^{\mu_1},\hspace{8pt}
\|\bm{\bm{U}}\|_{L^3((0,T)\times \Gamma_\eta)}\leq C\eta^{\mu_2},\nonumber\\
&\mu_1+\mu_2>1,\ \mu_2>\frac{1}{3}.
\end{align}
Then $(\mathbf u, w, p)$ globally conserves the energy, i.e.,
\begin{align*}
\|\mathbf u(t_2)\|_{L^2(\Omega)}=\|\mathbf u(t_1)\|_{L^2(\Omega)}
\end{align*}
for any $0<t_1<t_2<T$.
\end{theorem}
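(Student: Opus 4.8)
The plan is to deduce the global statement from the local energy equality of Theorem \ref{t2.1} by integrating it over an exhausting family of smooth interior domains and controlling the resulting boundary fluxes. Concretely, I would fix $\phi\in\mathcal{D}((0,T))$ approximating $\mathbf 1_{(t_1,t_2)}$ and construct, for small $\eta>0$, a spatial cutoff $\chi_\eta\in C_c^\infty(\mathrm{int}\,\Omega)$ equal to $1$ on a $C^2$ approximate interior domain $\Omega^\eta$ (the rounded-corner domain of Figure 2) and vanishing in a collar of $\partial\Omega$, built from the horizontal distance to $\partial S$ (via the nearest-point map $\bm\sigma$) and from the vertical coordinate $z$. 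I would arrange the transition layer so that $\nabla_{\mathbf x}\chi_\eta\approx \eta^{-1}\mathbf n(\bm\sigma(\mathbf X))$ is supported in $[S_{5\eta/4}\setminus S_{3\eta/2}]\times[0,1]$ and $\partial_z\chi_\eta\approx\pm\eta^{-1}$ is supported in $S\times I_\eta$, so that the layers match exactly the sets appearing in \eqref{2.16}. Hypotheses \eqref{2.15} and \eqref{2.17} guarantee that the local conservation law of Theorem \ref{t2.1} is available on a neighbourhood of $\mathrm{supp}\,\chi_\eta$ and that the pressure is a genuine function with the requisite integrability near $\partial\Omega$, so that the flux terms below are well defined.

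Testing the local energy identity against $\phi(t)\chi_\eta(\mathbf X)$ and integrating by parts distributionally yields
\[
\int_0^T\!\!\int_\Omega \tfrac{|\mathbf u|^2}{2}\,\chi_\eta\,\phi'\,d\mathbf X\,dt
+\int_0^T\!\!\int_\Omega\Big(\tfrac{|\mathbf u|^2}{2}+p\Big)\mathbf u\cdot\nabla_{\mathbf x}\chi_\eta\,\phi\,d\mathbf X\,dt
+\int_0^T\!\!\int_\Omega\Big(\tfrac{|\mathbf u|^2}{2}+p\Big)w\,\partial_z\chi_\eta\,\phi\,d\mathbf X\,dt=0.
\]
As $\eta\to0$ the first term tends to $\int_0^T\!\int_\Omega\tfrac{|\mathbf u|^2}{2}\phi'\,d\mathbf X\,dt$ by dominated convergence, since $\chi_\eta\to1$ pointwise and $|\mathbf u|^2\in L^1((0,T)\times\Omega)$ by $\mathbf u\in L^2L^2$. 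The second and third terms are the horizontal and vertical boundary fluxes; away from the edges, the chosen form of $\nabla\chi_\eta$ turns them into exactly $\eta^{-1}$ times the two shell integrals in \eqref{2.16}, which vanish in the limit by assumption.

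The main obstacle is the corner (edge) region $\Gamma_\eta$ where $\overline{\Gamma_s}$ meets $\Gamma_t\cup\Gamma_b$: there the signed distance is not $C^2$, the product of horizontal and vertical cutoffs no longer factors cleanly, and residual cross terms supported in $\Gamma_\eta$ appear, with $|\nabla\chi_\eta|\lesssim\eta^{-1}$. This is precisely where hypothesis 3 enters. I would estimate the corner contribution $\eta^{-1}\int_{(0,T)\times\Gamma_\eta}\big|(\tfrac{|\mathbf u|^2}{2}+p)\mathbf U\big|$ by splitting it. For the pressure part, Hölder in space-time ($\tfrac{1}{3/2}+\tfrac13=1$) together with \eqref{2.18} gives $\eta^{-1}\|p\|_{L^{3/2}((0,T)\times\Gamma_\eta)}\|\mathbf U\|_{L^3((0,T)\times\Gamma_\eta)}\lesssim\eta^{\mu_1+\mu_2-1}\to0$ because $\mu_1+\mu_2>1$. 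For the kinetic part, Hölder ($\tfrac13+\tfrac13+\tfrac13=1$) and $\|\mathbf u\|_{L^3}\le\|\mathbf U\|_{L^3}\lesssim\eta^{\mu_2}$ give $\eta^{-1}\|\mathbf u\|_{L^3}^2\|\mathbf U\|_{L^3}\lesssim\eta^{3\mu_2-1}\to0$ because $\mu_2>\tfrac13$. Thus the two relations in \eqref{2.18} are exactly what is needed to kill the corner fluxes, which is the genuinely new difficulty compared with the smooth-boundary Euler setting of \cite{b2}.

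Passing to the limit $\eta\to0$ therefore leaves $\int_0^T\!\int_\Omega\tfrac{|\mathbf u|^2}{2}\,\phi'(t)\,d\mathbf X\,dt=0$ for every $\phi\in\mathcal{D}((0,T))$, i.e.\ the map $t\mapsto\tfrac12\|\mathbf u(t)\|_{L^2(\Omega)}^2$ has vanishing distributional derivative and hence is (a.e.) constant; evaluating at $t_1$ and $t_2$ yields $\|\mathbf u(t_2)\|_{L^2(\Omega)}=\|\mathbf u(t_1)\|_{L^2(\Omega)}$. I expect the delicate points of the write-up to be (i) the explicit construction of the $C^2$ approximate domain $\Omega^\eta$ and the cutoff $\chi_\eta$ with the precisely calibrated transition layers matching \eqref{2.16}, and (ii) the verification that the smooth-part fluxes really collapse to the normal-flux integrals in \eqref{2.16}; the corner estimate, though conceptually the crux, is then a short Hölder computation governed by the exponent conditions $\mu_1+\mu_2>1$ and $\mu_2>\tfrac13$.
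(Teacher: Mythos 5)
Your proposal follows essentially the same route as the paper: both construct a $C^2$ rounded-corner interior domain, test the local energy identity of Theorem \ref{t2.1} against a cutoff whose $O(\eta)$ transition layer is calibrated so that the sidewall and top/bottom fluxes become exactly the shell integrals of \eqref{2.16}, and dispose of the $O(\eta^2)$-measure corner region via the two H\"older estimates $O(\eta^{\mu_1+\mu_2-1})$ and $O(\eta^{3\mu_2-1})$ coming from \eqref{2.18}. The only cosmetic differences are that the paper realizes the cutoff as $\phi(d_\eta(\bm{X})/\eta)$ with $d_\eta$ the distance to the smoothed domain $\Omega_\eta$ and writes the energy balance directly between the times $t_1$ and $t_2$, whereas you build the cutoff from horizontal and vertical pieces and pass through a temporal test function.
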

\begin{remark}
In our case, $\Omega$ is only a Lipschitz domain and the usual global estimates for elliptic equations can not be applied directly. Thus we construct a smooth subdomain $\Omega_\eta$ (defined in \eqref{4.3}) of $\Omega$ whose distance to $\partial\Omega$ is $O(\eta)$. Then we obtain the local energy conservation in $\Omega_\eta$ from Theorem \ref{t2.1}. Therefore by taking $\eta\rightarrow0$ and under condition \eqref{2.18}, we can prove Theorem \ref{t2.2}.

\end{remark}

\begin{figure}\label{regcyl}
\begin{minipage}[t]{0.48\textwidth}
\centering
\includegraphics[scale=0.2]{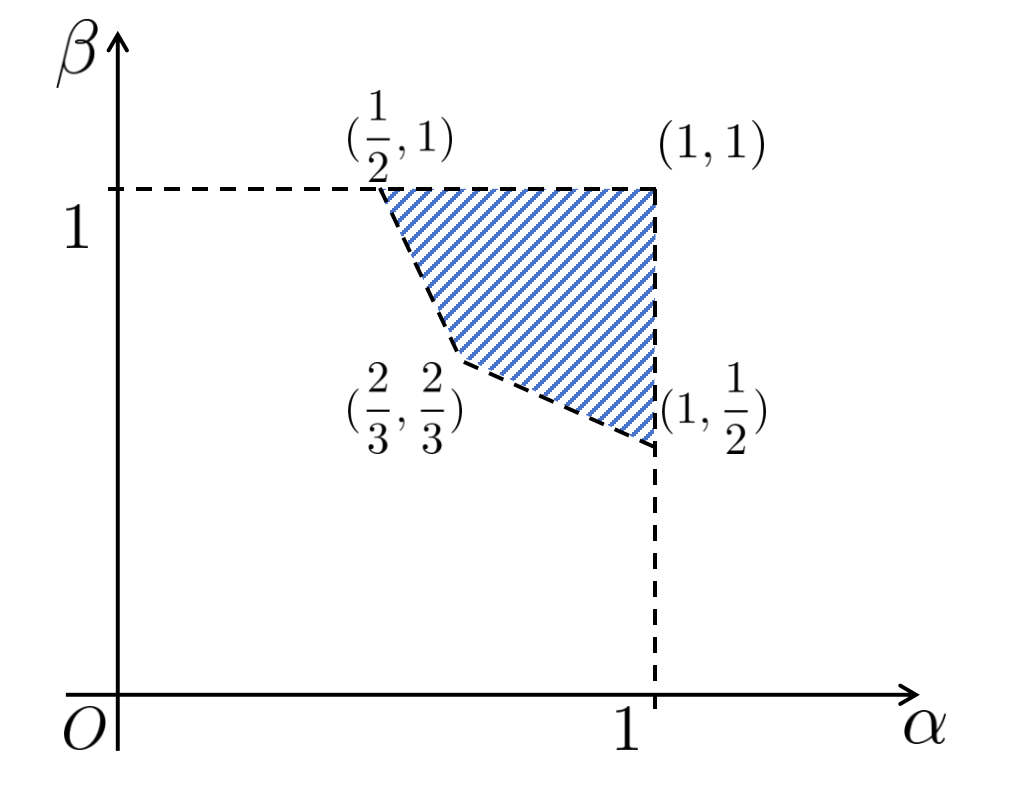}
\caption{relation of $\alpha$ and $\beta$}
\end{minipage}
\centering
\begin{minipage}[t]{0.28\textwidth}
\centering
\includegraphics[scale=0.2]{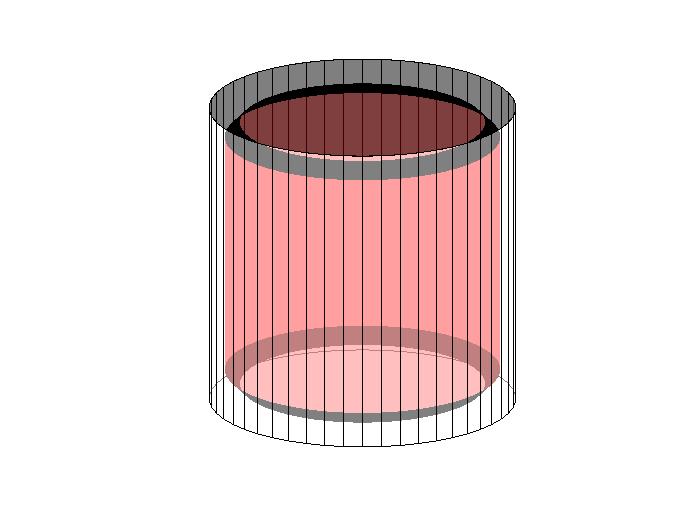}
\caption{regularize the cylinder}
\end{minipage}
\end{figure}


\section{Proof of Theorem \ref{t2.1}}\label{S3}

Before proving Theorem \ref{t2.1}, we need to obtain the local estimate on the pressure.
\begin{proposition}\label{pressureest}
Let $(\mathbf u, w,p)$ be a weak solution of PE \eqref{1.1} that satisfies in $\widetilde{Q}=(t_1,t_2)\times\widetilde{\Omega}\subset\subset(0,T)\times\Omega$ the hypothesis of Theorem~\ref{t2.1}. Then the restriction of the pressure $p$ to $Q_2=(t_1,t_2)\times\Omega_2$ belongs to $L^{\frac{3}{2}}\big{(}(t_1,t_2);C^{0,\alpha}_{\bf x}C^{0,\beta}_z(\overline{\Omega_2})\big{)}$ and satisfies the estimate
\begin{align*}
\|p\|_{L^{\frac{3}{2}}\left((t_1,t_2);C^{0,\alpha}_{\bf x}C^{0,\beta}_z(\overline{\Omega_2})\right)}\leq C
\left( \|\mathbf u\|_{L^{3}\left((t_1,t_2);C^{0,\alpha}_{\bf x}C^{0,\beta}_z(\overline{\Omega_2})\right)},\hspace{5pt}
\|p\|_{L^{\frac{3}{2}}\big{(}(t_1,t_2);H^{-\theta}(V_\gamma)\big{)}}\right).
\end{align*}

\end{proposition}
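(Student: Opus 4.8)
The plan is to exploit the hydrostatic relation $\partial_z p=0$ in \eqref{1.1}: since $p=p(t,\mathbf x)$ is independent of the vertical variable, the anisotropic seminorm $\|p\|_{C^{0,\alpha}_{\mathbf x}C^{0,\beta}_z}$ collapses to the purely horizontal seminorm $\|p\|_{C^{0,\alpha}_{\mathbf x}}$, and the whole statement becomes a two-dimensional elliptic estimate in $\mathbf x$. I would therefore reduce Proposition~\ref{pressureest} to a Calder\'on--Zygmund/Schauder statement for a Poisson equation on $\mathbb{R}^2_{\mathbf x}$, following the localization philosophy of \cite{b2} but adapted to the $z$-averaged system.

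First I would localize exactly as in Section~\ref{S2}: set $\bar p=I_{2,\eta}p$ and fix a vertical cutoff $\chi\in\mathcal D$ supported in the $z$-extent of $\Omega_1$ with $\int\chi\,dz=1$. Testing the horizontal momentum equation \eqref{2.1} against $\zeta(t)\chi(z)\nabla_{\mathbf x}\varphi(\mathbf x)$, integrating by parts in $z$ (the boundary terms vanish since $\chi$ is compactly supported in $(0,1)$), and using that $p$ is $z$-independent together with $\int\chi=1$, yields a two-dimensional identity of the schematic form
\begin{align*}
\Delta_{\mathbf x}\bar p=-\operatorname{div}_{\mathbf x}\operatorname{div}_{\mathbf x}\!\Big(\textstyle\int\chi\,\mathbf u\otimes\mathbf u\,dz\Big)+\operatorname{div}_{\mathbf x}\!\Big(\textstyle\int\chi'\,\mathbf u w\,dz\Big)-\partial_t\operatorname{div}_{\mathbf x}\!\Big(\textstyle\int\chi\,\mathbf u\,dz\Big)+(\text{commutators in }\nabla I_{2,\eta}),
\end{align*}
valid on $\Omega_2$. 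The first term is exactly the Euler-type Reynolds-stress source; writing its solution through the two-dimensional Riesz transforms $R_iR_j$, which are bounded on $C^{0,\alpha}_{\mathbf x}$, gives the main contribution $\lesssim\|\int\chi\,\mathbf u\otimes\mathbf u\,dz\|_{C^{0,\alpha}_{\mathbf x}}\lesssim\|\mathbf u\|^2_{C^{0,\alpha}_{\mathbf x}C^{0,\beta}_z}$. The commutator terms are supported where $\nabla I_{2,\eta}\neq0$, hence away from $\Omega_2$, so they contribute a function that is harmonic near $\Omega_2$; by interior elliptic regularity its $C^{0,\alpha}(\overline{\Omega_2})$ norm is controlled by a negative-order norm on the larger set, which is precisely where the hypothesis $p\in H^{-\theta}(V_\gamma)$ enters.

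The genuinely new difficulty, absent in \cite{b2}, is the pair of middle terms produced by the vertical velocity and by the fact that the horizontal field is not solenoidal ($\operatorname{div}_{\mathbf x}\mathbf u=-\partial_z w\neq0$). To handle them without any independent bound on $w$, I would substitute the incompressibility relation $w(t,\mathbf x,z)=-\int^z\operatorname{div}_{\mathbf x}\mathbf u\,dz'$, so that $w$ inherits $C^{0,\beta}$ regularity in $z$ but only regularity one horizontal derivative worse than $\mathbf u$. The product $\mathbf u w$ and the term $\partial_t\operatorname{div}_{\mathbf x}\int\chi\,\mathbf u\,dz=\partial_t\int\chi' w\,dz$ are then estimated by anisotropic (para-)product and commutator bounds in the spirit of \cite{bo}; this is exactly where the condition $2\min\{\alpha,\beta\}+\max\{\alpha,\beta\}>2$ is needed, to absorb the one-derivative loss of $w$ and still land in a space giving a $C^{0,\alpha}_{\mathbf x}$ contribution to $\bar p$. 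I expect this anisotropic control of the $w$-terms to be the main obstacle; in particular, the time-derivative term has no Euler analogue, and I would tame it through time mollification together with its divergence structure (so that it enters $p$ at one lower horizontal order via $\Delta_{\mathbf x}^{-1}\operatorname{div}_{\mathbf x}$) and through the lower-order bound by $\|p\|_{H^{-\theta}}$. Once the fixed-time estimate $\|p(t)\|_{C^{0,\alpha}_{\mathbf x}}\lesssim\|\mathbf u(t)\|^2_{C^{0,\alpha}_{\mathbf x}C^{0,\beta}_z}+\|p(t)\|_{H^{-\theta}(V_\gamma)}$ is established, raising to the power $3/2$ and integrating in time closes the estimate, since $\mathbf u\in L^3_t(C^{0,\alpha}_{\mathbf x}C^{0,\beta}_z)$ gives $\|\mathbf u\|^2\in L^{3/2}_t$ while $p\in L^{3/2}_t(H^{-\theta})$ by hypothesis.
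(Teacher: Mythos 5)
Your reduction to a two-dimensional elliptic problem via the $z$-independence of $p$ is exactly the right starting point, and your treatment of the horizontal localization (commutators supported where $\nabla I_{2,\eta}\neq 0$, harmonic remainder controlled through the $H^{-\theta}$ hypothesis) matches the strategy of \cite{b2} that the paper invokes. But your choice to average in $z$ against a compactly supported cutoff $\chi$ rather than over the whole interval $[0,1]$ is what creates the two ``middle terms'' that then become the hardest part of your argument, and the second of these is a genuine gap. The paper integrates $\eqref{1.1}_1$ over the full column: there $\int_0^1\partial_z(\mathbf u w)\,dz=\mathbf u w|_{z=0}^{z=1}=0$ by the boundary condition $w|_{z=0,1}=0$, and $\operatorname{div}_{\mathbf x}\int_0^1\mathbf u\,dz=-\int_0^1\partial_z w\,dz=0$ by incompressibility, so after taking $\operatorname{div}_{\mathbf x}$ one is left with exactly $-\Delta_{\mathbf x}p=\operatorname{div}_{\mathbf x}\operatorname{div}_{\mathbf x}\int_0^1\mathbf u\otimes\mathbf u\,dz$ and nothing else; the whole proposition then reduces to the Euler-type double-Riesz-transform estimate of \cite{b2,bo}. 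Since $p$ does not depend on $z$, nothing is lost by using the full column, and no vertical localization is needed at all.

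With your interior cutoff, by contrast, the term $\partial_t\operatorname{div}_{\mathbf x}\int\chi\,\mathbf u\,dz=\partial_t\int\chi' w\,dz$ does not vanish and is supported over all of $\Omega_2$ in the horizontal variables, so it is not part of the harmonic remainder and cannot be absorbed by the $H^{-\theta}(V_\gamma)$ bound on $p$ (that hypothesis fixes the harmonic part of the reconstruction; it says nothing about a time derivative of a velocity average). The hypotheses of Theorem \ref{t2.1} provide no time regularity of $\mathbf u$ or $w$, and $\Delta_{\mathbf x}^{-1}\operatorname{div}_{\mathbf x}$ gains a horizontal derivative but does nothing to $\partial_t$; time mollification only regularizes the term, leaving you to prove a uniform bound on its contribution to $\|p(t)\|_{C^{0,\alpha}_{\mathbf x}}$ that the stated assumptions do not supply. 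The $\int\chi'\,\mathbf u w\,dz$ term is likewise avoidable rather than merely treatable. As written the proposal does not close; it is repaired simply by replacing $\chi$ with integration over all of $[0,1]$ and invoking the boundary condition \eqref{1.3}, which is precisely the paper's proof.
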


\begin{proof}
We integrate $\eqref{1.1}_1$ in the vertical direction,
\begin{align*}
\partial_{t}\int^1_0\mathbf udz+\text{div}_{\bf x}\int^1_0(\mathbf{u}\otimes\mathbf{u})dz+\int^1_0\partial_z(\mathbf u w)dz+\nabla_{\bf x}\int^1_0pdz=0. \\
\end{align*}
Utilizing the boundary condition \eqref{1.3} and then taking the divergence, we get
\begin{align*}
-\Delta_{\bf x}p=\text{div}_{\bf x}\text{div}_{\bf x}\int^1_0(\mathbf{u}\otimes\mathbf{u})dz.
\end{align*}
This is an elliptic problem similar as in \cite{b2,bo}. According to the methods therein, we can obtain our desired result.

\end{proof}

Now we turn to prove Theorem \ref{t2.1}.  Similar to \cite{b2}, we consider the test function $\Psi=\chi(t)\phi(\mathbf x,z)$ with compact support in $(t_a,t_b)\times S_\phi
\subset\subset(t_1,t_2)\times\widetilde{\Omega}$, and introduce the open sets $Q_i$ satisfying \eqref{2.4} and the corresponding mollifiers \eqref{2.10}. Clearly $\chi(t)\phi(\mathbf x,z)\mathbf u(\mathbf x,z,t)\in L^3(Q_1)$ has its support in $Q_3$. We extend it to the whole space:
\begin{align*}
\overline{\chi(t)\phi(\mathbf x,z)\mathbf u(\mathbf x,z,t)}=\chi(t)\phi(\mathbf x,z)\overline{\mathbf u(\mathbf x,z,t)}\in L^3(\mathbb{R}_t\times\mathbb{R}^2_{\mathbf x}\times\mathbb{R}_z)
\end{align*}
Then the extension is regularized according to
\begin{align*}
\Psi_{\epsilon,\kappa}=\rho_{\epsilon,\kappa}\star\left(\chi(t)\phi(\mathbf x,z)(\rho_{\epsilon,\kappa}\star\overline{\mathbf u})(t,\mathbf x,z)\right)
=:\left(\chi(t)\psi(\mathbf x,z)(\overline{\mathbf u(t,\mathbf x,z)})^{\epsilon,\kappa}\right)^{\epsilon,\kappa}\in\mathcal{D}(\mathbb{R}_t\times\mathbb{R}^2_{\mathbf x}\times\mathbb{R}_z),
\end{align*}
where $\epsilon\in(0,\frac{\eta}{2})$, $\kappa\in(0,\frac{\tau}{2})$. Here we choose $\Psi_{\epsilon,\kappa}$ as the test function in the definition of PE in \eqref{2.1}, and have the formula
\begin{align*}
\langle\langle\partial_{t}\mathbf u+\text{div}_{\bf x}(\mathbf{u}\otimes\mathbf{u})+\partial_z(\mathbf u w)+\nabla_{\bf x}p, \Psi_{\epsilon,\kappa}\rangle\rangle=0.
\end{align*}
Then we denote $I^{\epsilon,\kappa}_1=\langle\langle\partial_t\mathbf u, \Psi_{\epsilon,\kappa}\rangle\rangle$,
$I^{\epsilon,\kappa}_2=\langle\langle\text{div}_{\bf x}(\mathbf{u}\otimes\mathbf{u}), \Psi_{\epsilon,\kappa}\rangle\rangle$,
$I^{\epsilon,\kappa}_3=\langle\langle\partial_z(\mathbf u w), \Psi_{\epsilon,\kappa}\rangle\rangle$ and
$I^{\epsilon,\kappa}_4=\langle\langle\nabla_{\bf x}p, \Psi_{\epsilon,\kappa}\rangle\rangle$. The terms $I^{\epsilon,\kappa}_1$ and $I^{\epsilon,\kappa}_2$ can be calculated the same way as in \cite{b2}, we just state the result:
\begin{align}\label{3.1}
I^{\epsilon,\kappa}_1=\int_{\mathbb{R}_t\times\mathbb{R}^2_{\mathbf x}\times\mathbb{R}_z}(\partial_t\overline{\mathbf u})^{\epsilon,\kappa}
\left(\chi(t)\psi(\mathbf x,z)\overline{\mathbf u(t,\mathbf x,z)}\right)^{\epsilon,\kappa}d\mathbf xdzdt,
\end{align}
and
\begin{align}\label{3.2}
I^{\epsilon,\kappa}_2&=-\int_{\mathbb{R}_t}\chi(t)
\int_{\mathbb{R}^2_{\mathbf x}\times\mathbb{R}_z}[(\overline{\mathbf u}\otimes\overline{\mathbf u})^{\epsilon,\kappa}-
\left((\overline{\mathbf u})^{\epsilon,\kappa}\otimes(\overline{\mathbf u})^{\epsilon,\kappa}\right)]:\nabla_{\mathbf x}(\psi(\mathbf x,z)(\overline{\mathbf u})^{\epsilon,\kappa})d\mathbf xdzdt\nonumber\\
&-\int_{\mathbb{R}_t}\chi(t)
\int_{\mathbb{R}^2_{\mathbf x}\times\mathbb{R}_z}\frac{|(\overline{\mathbf u})^{\epsilon,\kappa}|^2}{2}(\overline{\mathbf u})^{\epsilon,\kappa}\cdot\nabla_{\mathbf x}\psi(\mathbf x,z)d\mathbf xdzdt\nonumber\\
&-\int_{\mathbb{R}_t}\chi(t)
\int_{\mathbb{R}^2_{\mathbf x}\times\mathbb{R}_z}\text{div}_{\mathbf x}(\overline{\mathbf u})^{\epsilon,\kappa}\frac{|(\overline{\mathbf u})^{\epsilon,\kappa}|^2}{2}\psi(\mathbf x,z)d\mathbf xdzdt.
\end{align}
For $I^{\epsilon,\kappa}_{3}$, we have
\begin{align}\label{3.3}
I^{\epsilon,\kappa}_3&=\langle\langle\partial_z(\mathbf u w), \Psi_{\epsilon,\kappa}\rangle\rangle
=-\int_{Q_2}\mathbf uw\partial_z\Psi_{\epsilon,\kappa}d\mathbf Xdt
=-\int_{\mathbb{R}_t\times\mathbb{R}^3_{\mathbf X}}\overline{\mathbf u}\overline{w}\partial_z\Psi_{\epsilon,\kappa}d\mathbf Xdt\nonumber\\
&=-\int_{\mathbb{R}_t\times\mathbb{R}^2_{\mathbf x}\times\mathbb{R}_z}\overline{\mathbf u}\overline{w}\partial_z\left(\chi(t)\psi(\mathbf x,z)(\overline{\mathbf u})^{\epsilon,\kappa}\right)^{\epsilon,\kappa}d\mathbf xdzdt\nonumber\\
&=-\int_{\mathbb{R}_t\times\mathbb{R}^2_{\mathbf x}\times\mathbb{R}_z}
[(\overline{\mathbf u}\overline{w})^{\epsilon,\kappa}-(\overline{\mathbf u})^{\epsilon,\kappa}(\overline{w})^{\epsilon,\kappa})]\cdot
\partial_{z}\left(\chi(t)\psi(\mathbf x,z)(\overline{\mathbf u})^{\epsilon,\kappa}\right)d\mathbf xdzdt\nonumber\\
&\hspace{6pt}-\int_{\mathbb{R}_t\times\mathbb{R}^2_{\mathbf x}\times\mathbb{R}_z}
(\overline{\mathbf u})^{\epsilon,\kappa}(\overline{w})^{\epsilon,\kappa}\cdot
\partial_{z}\left(\chi(t)\psi(\mathbf x,z)(\overline{\mathbf u})^{\epsilon,\kappa}\right)d\mathbf xdzdt.
\end{align}
Then, we integrate by parts in the second term on the right hand side of \eqref{3.3} and get
\begin{align}\label{3.4}
-\int_{\mathbb{R}_t\times\mathbb{R}^2_{\mathbf x}\times\mathbb{R}_z}
&(\overline{\mathbf u})^{\epsilon,\kappa}(\overline{w})^{\epsilon,\kappa}\cdot
\partial_{z}\left(\chi(t)\psi(\mathbf x,z)(\overline{\mathbf u})^{\epsilon,\kappa}\right)d\mathbf xdzdt\nonumber\\
&=-\int_{\mathbb{R}_t\times\mathbb{R}^2_{\mathbf x}\times\mathbb{R}_z}
\frac{|(\overline{\mathbf u})^{\epsilon,\kappa}|^2}{2}(\overline{w}^{\epsilon,\kappa})\partial_{z}\left(\chi(t)\psi(\mathbf x,z)\right)d\mathbf xdzdt\nonumber\\
&\hspace{6pt}+\int_{\mathbb{R}_t\times\mathbb{R}^2_{\mathbf x}\times\mathbb{R}_z}\frac{|(\overline{\mathbf u})^{\epsilon,\kappa}|^2}{2}\chi(t)\psi(\mathbf x,z)\partial_z(\overline{w})^{\epsilon,\kappa}d\mathbf xdzdt.
\end{align}
We combine the third term of \eqref{3.2} and the second term of \eqref{3.4} to obtain
\begin{align}\label{3.5}
\int_{\mathbb{R}_t}\chi(t)
\int_{\mathbb{R}^2_{\mathbf x}\times\mathbb{R}_z}
\left(\text{div}_{\mathbf x}(\overline{\mathbf u})^{\epsilon,\kappa}+\partial_z(\overline{w})^{\epsilon,\kappa}\right)
\frac{|(\overline{\mathbf u})^{\epsilon,\kappa}|^2}{2}\psi(\mathbf x,z)d\mathbf xdzdt\nonumber\\
=\int_{\mathbb{R}_t}\chi(t)
\int_{\mathbb{R}^2_{\mathbf x}\times\mathbb{R}_z}
\left(\overline{\text{div}_{\mathbf x}\mathbf u+\partial_zw}\right)^{\epsilon,\kappa}
\frac{|(\overline{\mathbf u})^{\epsilon,\kappa}|^2}{2}\psi(\mathbf x,z)d\mathbf xdzdt=0,
\end{align}
where we used Proposition \ref{pro2.1} and the incompressibility condition.

By the same token, for $I^{\epsilon,\kappa}_{4}$, we have
\begin{align}\label{3.6}
I^{\epsilon,\kappa}_4&=\langle\langle\nabla_{\mathbf x}p, \Psi_{\epsilon,\kappa}\rangle\rangle
=-\int_{\mathbb{R}_t\times\mathbb{R}^2_{\mathbf x}\times\mathbb{R}_z}\overline{p}\text{div}_{\mathbf x}\left(\chi(t)\psi(\mathbf x,z)(\overline{\mathbf u})^{\epsilon,\kappa}\right)^{\epsilon,\kappa}d\mathbf Xdt\nonumber\\
&=-\int_{\mathbb{R}_t}\chi(t)\int_{\mathbb{R}^2_{\mathbf x}\times\mathbb{R}_z}
(\overline{p})^{\epsilon,\kappa}\left((\overline{\mathbf u})^{\epsilon,\kappa}\cdot\nabla_{\mathbf x}\psi
+\text{div}_{\mathbf x}(\overline{\mathbf u})^{\epsilon,\kappa}\psi\right)d{\bf x} dzdt\nonumber\\
&=I^{\epsilon,\kappa}_{41}+I^{\epsilon,\kappa}_{42}.
\end{align}
Using Proposition \ref{pro2.1} again, it is evident to obtain
\begin{align}\label{3.7}
I^{\epsilon,\kappa}_{42}=
&=-\int_{\mathbb{R}_t}\chi(t)\int_{\mathbb{R}^2_{\mathbf x}\times\mathbb{R}_z}
(\overline{p})^{\epsilon,\kappa}\text{div}_{\mathbf x}(\overline{\mathbf u})^{\epsilon,\kappa}\psi d\mathbf xdzdt
=-\int_{\mathbb{R}_t}\chi(t)\int_{\mathbb{R}^2_{\mathbf x}\times\mathbb{R}_z}
(\overline{p})^{\epsilon,\kappa}(\overline{\text{div}_{\mathbf x}\mathbf u})^{\epsilon,\kappa}\psi d\mathbf xdzdt\nonumber\\
&=\int_{\mathbb{R}_t}\chi(t)\int_{\mathbb{R}^2_{\mathbf x}\times\mathbb{R}_z}
(\overline{p})^{\epsilon,\kappa}(\overline{\partial_z w})^{\epsilon,\kappa}\psi d\mathbf xdzdt
=\int_{\mathbb{R}_t}\chi(t)\int_{\mathbb{R}^2_{\mathbf x}\times\mathbb{R}_z}
(\overline{p})^{\epsilon,\kappa}\partial_z (\overline{w})^{\epsilon,\kappa}\psi d\mathbf xdzdt\nonumber\\
&=-\int_{\mathbb{R}_t}\chi(t)\int_{\mathbb{R}^2_{\mathbf x}\times\mathbb{R}_z}
(\overline{p})^{\epsilon,\kappa}(\overline{w})^{\epsilon,\kappa}\partial_z \psi d\mathbf xdzdt.
\end{align}
Summing up all the above equations together, we get
\begin{align}\label{3.8}
\int_{\mathbb{R}_t\times\mathbb{R}^2_{\mathbf x}\times\mathbb{R}_z}
&\left[\frac{|(\overline{\mathbf u})^{\epsilon,\kappa}|^2}{2}\partial_t(\psi(\mathbf x,z)\chi(t))
+\left(\frac{|(\overline{\mathbf u})^{\epsilon,\kappa}|^2}{2}(\overline{\mathbf u})^{\epsilon,\kappa}+(\overline{p})^{\epsilon,\kappa}(\overline{\mathbf u})^{\epsilon,\kappa}\right)\nabla_{\mathbf x}(\psi(\mathbf x,z)\chi(t))\right.\nonumber\\
&+\left.\left(\frac{|(\overline{\mathbf u})^{\epsilon,\kappa}|^2}{2}+(\overline{p})^{\epsilon,\kappa}\right)(\overline{w})^{\epsilon,\kappa}\partial_z(\chi(t)\psi(\mathbf x,z))\right]d{\bf x}dzdt\nonumber\\
&=-\int_{\mathbb{R}_t\times\mathbb{R}^2_{\mathbf x}\times\mathbb{R}_z}[(\overline{\mathbf u}\otimes\overline{\mathbf u})^{\epsilon,\kappa}-
\left((\overline{\mathbf u})^{\epsilon,\kappa}\otimes(\overline{\mathbf u})^{\epsilon,\kappa}\right)]:\nabla_{\mathbf x}(\chi(t)\psi(\mathbf x,z)(\overline{\mathbf u})^{\epsilon,\kappa})d\mathbf xdzdt\nonumber\\
&\hspace{6pt}-\int_{\mathbb{R}_t\times\mathbb{R}^2_{\mathbf x}\times\mathbb{R}_z}
[(\overline{\mathbf u}\overline{w})^{\epsilon,\kappa}-(\overline{\mathbf u})^{\epsilon,\kappa}(\overline{w})^{\epsilon,\kappa})]\cdot
\partial_{z}\left(\chi(t)\psi(\mathbf x,z)(\overline{\mathbf u})^{\epsilon,\kappa}\right)d\mathbf xdzdt.
\end{align}
Letting $\kappa\rightarrow0$ in \eqref{3.8} and recalling Proposition~\ref{pressureest}, one obtains that
\begin{align}\label{3.9}
\int_{\mathbb{R}_t\times\mathbb{R}^2_{\mathbf x}\times\mathbb{R}_z}
&\big[\frac{|(\overline{\mathbf u})^{\epsilon}|^2}{2}\partial_t(\psi(\mathbf x,z)\chi(t))
+\left(\frac{|(\overline{\mathbf u})^{\epsilon}|^2}{2}(\overline{\mathbf u})^{\epsilon}+(\overline{p})^{\epsilon}(\overline{\mathbf u})^{\epsilon}\right)\nabla_{\mathbf x}(\psi(\mathbf x,z)\chi(t))\nonumber\\
&+\left(\frac{|(\overline{\mathbf u})^{\epsilon}|^2}{2}+(\overline{p})^{\epsilon}\right)(\overline{w})^{\epsilon}\partial_z(\chi(t)\psi(\mathbf x,z))\big]\nonumber\\
&=-\int_{\mathbb{R}_t}\chi(t)\int_{\mathbb{R}^2_{\mathbf x}\times\mathbb{R}_z}[(\overline{\mathbf u}\otimes\overline{\mathbf u})^{\epsilon}-
\left((\overline{\mathbf u})^{\epsilon}\otimes(\overline{\mathbf u})^{\epsilon}\right)]:\nabla_{\mathbf x}(\psi(\mathbf x,z)(\overline{\mathbf u})^{\epsilon})d\mathbf xdzdt\nonumber\\
&-\int_{\mathbb{R}_t}\chi(t)\int_{\mathbb{R}^2_{\mathbf x}\times\mathbb{R}_z}
[(\overline{\mathbf u}\overline{w})^{\epsilon}-(\overline{\mathbf u})^{\epsilon}(\overline{w})^{\epsilon})]\cdot
\partial_{z}\left(\psi(\mathbf x,z)(\overline{\mathbf u})^{\epsilon}\right)d\mathbf xdzdt.
\end{align}

Let us first demonstrate the convergence to zero, as $\epsilon\to0$, of the error terms on the right hand side. Owing to the well-known commutator estimate from \cite{co} in the anisotropic H\"older space,
\begin{align}\label{3.10}
\int^{t_2}_{t_1}\chi(t)&|\langle(\overline{\mathbf u})^\epsilon\otimes(\overline{\mathbf u})^\epsilon-(\overline{\mathbf u\otimes\mathbf u})^\epsilon,\nabla_{\mathbf x}(\psi(\overline{\mathbf u})^\epsilon)\rangle|d\mathbf xdzdt\nonumber\\
&\leq C(\chi,\psi)(\epsilon^{3\alpha-1}+\epsilon^{\alpha+2\beta-1})
\int^{t_2}_{t_1}\|\mathbf u\|^3_{C^{0,\alpha}_{\mathbf x}C^{0,\beta}_{z}(\overline{\widetilde{\Omega}})}dt.
\end{align}
This converges to zero as $\epsilon\to0$ under the stated assumptions.

For the second commutator in~\eqref{3.9}, first observe in analogy with \cite{b2} that
\begin{align}\label{3.11}
\partial_z\big(\phi(\mathbf x,z)(\overline{\mathbf u})^\epsilon\big)
&=\partial_z\psi(\overline{\mathbf u})^\epsilon+\psi\partial_z(\overline{\mathbf u})^\epsilon\nonumber\\
&=\partial_z\psi(\overline{\mathbf u})^\epsilon
+\psi\int_{\mathbb{R}^2_{\mathbf x}\times\mathbb{R}_z}\partial_z\rho_\epsilon(\mathbf x,z-y)I_{2}(y)\overline{\mathbf u}(y)dyd{\bf x}\nonumber\\
&=\partial_z\psi(\overline{\mathbf u})^\epsilon
+\psi\int_{\mathbb{R}^2_{\mathbf x}\times\mathbb{R}_z}\partial_z\rho_\epsilon(\mathbf x,z-y)\big(I_2(z)\overline{\mathbf u}(z)-I_{2}(y)\overline{\mathbf u}(y)\big)dyd{\bf x},
\end{align}
whence we deduce
\begin{align}\label{3.12}
|\partial_z\big(\phi(\mathbf x,z)(\overline{\mathbf u})^\epsilon\big)|\leq C\epsilon^{\beta-1}\|\mathbf u\|_{C^{0,\alpha}_{\bf x}C^{0,\beta}_z}.
\end{align}
As in \cite{co}, we have
\begin{align}
(\overline{\mathbf u})^\epsilon(\overline{w})^\epsilon-(\overline{\mathbf u}\overline{w})^\epsilon
=\left(\overline{\mathbf u}-(\overline{\mathbf u})^\epsilon\right)
\left(\overline{w}-(\overline{w})^\epsilon\right)
-\int\delta_y\overline{\mathbf u}\delta_y\overline{w}\rho_\epsilon(y)dy,
\end{align}
where $\delta_y\overline{\mathbf u}=\overline{\mathbf u}(\mathbf x,z-y)-\overline{\mathbf u}(\mathbf x,z)$. We only treat the first term as the second is analogous.

By direct calculation, we have $|\overline{\mathbf u}-(\overline{\mathbf u}^\epsilon)|\leq C(\epsilon^\alpha+\epsilon^\beta)\|\mathbf u\|_{C^{0,\alpha}_xC^{0,\beta}_z}$.


Recalling the incompressibility condition, we have $w=\int^z_0\text{div}_{\mathbf x}\mathbf u(\mathbf x,s)ds$ and
\begin{align*}
\big{(}\overline{w}-(\overline{w}^\epsilon)\big{)}(\mathbf X)
&=\int_{\mathbb{R}^2_{\mathbf x}\times\mathbb{R}_z}\big{(}1-\rho_\epsilon\big{)}(\mathbf Y)\overline{w}(\mathbf X-\mathbf Y)d\mathbf Y\\
&=\int_{\mathbb{R}^2_{\mathbf x}\times\mathbb{R}_z}\big{(}1-\rho_\epsilon\big{)}(\mathbf Y)
\left(\int^{z-y_3}_0\text{div}_{\mathbf x}\overline{\mathbf u}(\mathbf x-\mathbf y,s)ds\right)
d\mathbf Y\\
&=\int_{\mathbb{R}_z}dy_3\int^{z-y_3}_0ds
\int_{\mathbb{R}^2_{\mathbf x}}\big{(}1-\rho_\epsilon\big{)}(\mathbf Y)
\text{div}_{\mathbf x}\overline{\mathbf u}(\mathbf x-\mathbf y,s)d\mathbf y
\end{align*}
where $\mathbf X=(\mathbf x,z)$ and $\mathbf Y=(\mathbf y,y_3)$.

Therefore, it is obvious to have the following through integration by parts
\begin{align*}
|\overline{w}-(\overline{w}^\epsilon)|
&\leq|\int_{\mathbb{R}_z}dy_3\int^{z-y_3}_0ds
\int_{\mathbb{R}^2_{\mathbf x}}\nabla_{\mathbf y}\rho_\epsilon(\mathbf Y)\cdot\overline{\mathbf u}(\mathbf x-\mathbf y,s)d\mathbf y|\\
&=|\int_{\mathbb{R}_z}dy_3\int^{z-y_3}_0ds
\int_{\mathbb{R}^2_{\mathbf x}}\nabla_{\mathbf y}\rho_\epsilon(\mathbf Y)\cdot
\left(\overline{\mathbf u}(\mathbf x,s)-\overline{\mathbf u}(\mathbf x-\mathbf y,s)\right)d\mathbf y|\\
&\leq C\epsilon^{\alpha-1}\|\mathbf u\|_{C^{0,\alpha}_{\bf x}C^{0,\beta}_z},
\end{align*}
where we have used that $\mathbf u\in L^3((0,T);C^{0,\alpha}_{\bf x}C^{0,\beta}_z)$ and
$\int_{\mathbb{R}^2_{\mathbf x}\times\mathbb{R}_z}|\nabla_{\mathbf y}\rho_{\epsilon}(\mathbf Y)|d\mathbf Y\leq C\epsilon^{-1}$.

Putting these estimates together, we get
\begin{align}\label{3.14}
\int^{t_2}_{t_1}\chi(t)&|\langle(\overline{\mathbf u}\overline{w})^{\epsilon}-(\overline{\mathbf u})^{\epsilon}(\overline{w})^{\epsilon},\partial_{z}(\psi(\mathbf x,z)(\overline{\mathbf u})^{\epsilon})\rangle|dt\nonumber\\
&\leq C(\chi,\psi)\epsilon^{\beta-1}(\epsilon^\alpha+\epsilon^\beta)\epsilon^{\alpha-1}\int^{t_2}_{t_1}\|\mathbf u\|^3_{C^{0,\alpha}_{\mathbf x}C^{0,\beta}_{z}(\overline{\widetilde{\Omega}})}dt.
\end{align}
Depending on whether $\alpha<\beta$ or not, we obtain the requirements $2\alpha+\beta>2$ or $\alpha+2\beta>2$, respectively.
Finally, we consider the left hand side of~\eqref{3.9}. As $\epsilon\to0$, it converges to the expression in the local energy equality, tested against $\chi\psi$.

\qed

\begin{remark}
Let us briefly discuss the cases $\alpha>1$ or $\beta>1$. In \cite{bo}, it is required that both the horizontal and the vertical velocities belong to $L^4((0,T);B^{\alpha,\infty}_{4}(\mathbb{T}^3))$ ($\frac{1}{2}<\alpha<1$) in Proposition 3.1, and to
$L^3((0,T);B^{\beta,\infty}_{3}(\mathbb{T};B^{\alpha,\infty}_{3}(\mathbb{T}^2))$ ($\alpha>1,\beta>\frac{1}{3}$, $\alpha+2\beta>2$) in Proposition 3.8.

%
%

{ 
In comparison, we too can decrease the vertical regularity if we increase the horizontal one. More precisely, {considering $\mathbf u\in C^{1,\alpha-1}_{\bf x}C^{0,\beta}_z (1<\alpha<2)$ and} taking into account the incompressibility condition $w=\int^z_0\operatorname{div}_{\mathbf x}\mathbf u(\mathbf x,s)ds$, we deduce directly $w \in C^{0,\alpha-1}_{\bf x}C^{0,\beta+1}_z$, which implies that
\begin{align*}
|\overline{w}-(\overline{w}^\epsilon)|
\leq C\epsilon^{\alpha-1}\|w\|_{C^{0,\alpha-1}_{\bf x}C^{0,\beta+1}_z}.
\end{align*}
We thus obtain
\begin{align*}
\int^{t_2}_{t_1}\chi(t)&|\langle(\overline{\mathbf u}\overline{w})^{\epsilon}-(\overline{\mathbf u})^{\epsilon}(\overline{w})^{\epsilon},\partial_{z}(\psi(\mathbf x,z)(\overline{\mathbf u})^{\epsilon})\rangle|dt\nonumber\\
&\leq C(\chi,\psi)\epsilon^{\alpha+2\beta-2}
\int^{t_2}_{t_1}\|\mathbf u\|^2_{C^{0,\alpha}_{\mathbf x}C^{0,\beta}_{z}(\overline{\widetilde{\Omega}})}\|w\|_{C^{0,\alpha-1}_{\bf x}C^{0,\beta+1}_z}dt.
\end{align*}
We can get the same conclusion as Theorem \ref{t2.1}, only requiring $\beta\in (0, 1/2)$, $\alpha+2\beta>2$.

{In contrast}, it is interesting to find that, due to~\eqref{3.10}, the horizontal regularity always needs to stay greater than $1/3$, even if we increase the vertical regularity.

This shows that the anisotropic relation between the vertical and horizontal directions plays an important role in the problem of energy conservation, as already pointed out in~\cite{bo}.

}

\end{remark}

\section{Proof of Theorem \ref{t2.2}}\label{S4}
In order to obtain the global conservation of energy, we must control the boundary condition. Under the assumption that the boundary is a $C^2$ manifold, Bardos, Titi, and Wiedemann~\cite{b2} consider the distance function
\begin{align*}
d(\mathbf X)=d(\mathbf X,\partial\Omega)=\inf_{\mathbf Y\in\partial\Omega}|\mathbf X-\mathbf Y|\geq0,
\hspace{6pt}\text{and}\hspace{5pt} V_{\eta_0}=\{\mathbf X\in\Omega, d(\mathbf X)<\eta_0\}.
\end{align*}
It is known~\cite{foote} that $d|_{V_{\eta_0}}\in C^2(\overline{V_{\eta_0}})$ and for any $\mathbf X\in V_{\eta_0}$, there exists a unique $\sigma(\mathbf X)\in \partial\Omega$ such that
\begin{align*}
d(\mathbf X)=|\mathbf X-\sigma(\mathbf X)|, \quad\quad \nabla d(\mathbf X)=-\mathbf n(\sigma(\mathbf X)).
\end{align*}

However, we should emphasize that our domain is a cylinder and its boundary is not $C^2$. Therefore, the method used in \cite{b2} can not be straightforwardly applied to our proof. Here we regularize the boundary, smoothing the corner point, so that the boundary is $C^2$ and one may safely employ the distance function.

We consider a subset $\widetilde{\Omega}\subset\subset\Omega$ such that $\Omega\setminus\widetilde{\Omega}\subset\subset V_{\eta_0}$. And we assume $\Omega'$ is a domain $\widetilde{\Omega}\subset\subset\Omega'\subset\subset\Omega$. From the hypothesis \eqref{2.15} and Proposition 3.1, we deduce
\begin{align*}
\mathbf u\otimes\mathbf u\in L^{\frac{3}{2}}((t_1,t_2); C^{0,\alpha}_{\mathbf x}C^{0,\beta}_{z}(\overline{\Omega'})),\hspace{5pt}
p\in L^{\frac{3}{2}}((t_1,t_2); C^{0,\alpha}_{\mathbf x}C^{0,\beta}_{z}(\overline{\Omega'})).
\end{align*}
Thanks to Theorem \ref{t2.1}, one obtains
\begin{align*}
\frac{d}{dt}\langle\frac{|\mathbf u|^2}{2},\psi\rangle-\langle\left(\frac{|\mathbf u|^2}{2}+p\right)\mathbf u,\nabla_{\mathbf x}\psi\rangle-\langle\left(\frac{|\mathbf u|^2}{2}+p\right)w,\partial_z\psi\rangle=0
\end{align*}
for any $\psi\in C^1_c(\widetilde{\Omega})$.

For $\eta>0$ sufficiently small, we denote
$S_{\eta}=\{\bm{x}\in S:\ {\rm dist}_{{\mathbb R}^2}
(\bm{x},\partial S)
<\eta\}$. As $\partial S\in C^2$, then so is $\partial S_\eta$. Thus for $\bm{x}\in S_{\eta}\setminus S_{2\eta}$,
there exist $\bm{x}^{(0)}\in \partial S$,
$\bm{x}^{(1)}\in \partial S_\eta$ and
$\bm{x}^{(2)}\in \partial S_{2\eta}$ such that $\bm{x}$
and $\bm{x}^{(j)}$, $j=0,1,2$ are all on the line with direction
$\bm{n}(\sigma(\bm{x}))$, which is the normal direction of $\partial S$
at $\bm{x}^{(0)}$. It is also the normal direction of
$\partial S_{j\eta}$ at $\bm{x}^{(j)}$, $j=1,2$.

Choose a strictly increasing function $\tau:[0,\eta]\to[\eta,2\eta]$ that is $C^2$ on $(0,\eta)$ and such that $\tau(0)=\eta$, $\tau'(0)=\tau''(0)=0$ and $\tau(\eta)=2\eta$, $(\tau^{-1})'(2\eta)=(\tau^{-1})''(2\eta)=0$, and
set
\begin{align}
\omega_1(\bm{x})&=\tau\left(|\bm{x}-\bm{x}^{(2)}|\right),\\
\omega_2(\bm{x})&={1-\omega_1(\bm{x})}
\end{align}
for $\bm{x}\in S_{\eta}\setminus S_{2\eta}$.
Then we define the domain $\Omega_\eta\subset
S_{\eta}\times(\eta,1-\eta)$
as follows:
\begin{equation}\label{4.3}
z\in
\left\{\begin{array}{ll}
(\eta,1-\eta),\ & {\rm if }\
\bm{x}\in S_{2\eta},\\
(\omega_1(\bm{x}),\omega_2(\bm{x})),\ & {\rm if }\
\bm{x}\in S_{\eta}\setminus S_{2\eta}.
\end{array}
\right.
\end{equation}
It can be checked that $\partial\Omega_{\eta}\in C^2$  (see Figure~2). Then we introduce a non-negative $C^\infty$ function
\begin{equation}
\phi(s)=
\left\{
\begin{array}{ll}
1, & s>\frac{1}{2},\\
0, & s<\frac{1}{4},
\end{array}\right.
\end{equation}
satisfying $|\phi'|\le 10$ and consider the non-negative function
$d_{\eta}(\bm{X})={\rm dist}(\bm{X},\partial\Omega_\eta)$, which is $C^2$ near $\partial\Omega_\eta$.
We define $\psi_{\eta}(\bm{X})
=\phi(\frac{d_\eta(\bm{X}))}{\eta})\in C_c^2(\Omega)$, so that
\begin{equation}
\nabla\psi_{\eta}(\bm{X})=
\left\{
\begin{array}{ll}
-\frac{1}{\eta}\phi'(\frac{d_\eta(\bm{X})}{\eta})
\bm{N}(\bm{\sigma}(\bm{X})),
& \frac{\eta}{4}<d(\bm{X})<\frac{\eta}{2},\\
0, & {\rm otherwise }.
\end{array}
\right.
\end{equation}

Furthermore, we have
\begin{equation}
\nabla\psi_{\eta}(\bm{X})=
\left\{
\begin{array}{ll}
\frac{1}{\eta}\phi'(\frac{d_\eta(\bm{X})}{\eta})
\bm{e}_3, & \bm{X}\in S_{2\eta}\times
(\frac{5}{4}\eta,\frac{3}{2}\eta),\\
-\frac{1}{\eta}\phi'(\frac{d_\eta(\bm{X})}{\eta})
\bm{e}_3, & \bm{X}\in S_{2\eta}\times
(1-\frac{3}{2}\eta,1-\frac{5}{4}\eta),\\
-\frac{1}{\eta}\phi'(\frac{d_\eta(\bm{X})}{\eta})
\left(\bm{n}(\bm{\sigma}(\bm{X})),0\right), & \bm{X}\in
(S_{\frac{5}{4}\eta}
\setminus S_{\frac{3}{2}\eta})\times
(2\eta,1-2\eta).
\end{array}
\right.
\end{equation}
Then we have
\begin{align}
&\int_\Omega\frac{|\bm{u}(t_2,\mathbf x,z)|^2}{2}\psi_\eta d\bm{x}dz
-\int_\Omega\frac{|\bm{u}(t_1,\bm{x},z)|^2}{2}\psi_\eta d\bm{x}dz\nonumber\\
&=-\int^{t_2}_{t_1}\int_{\{\frac{\eta}{4}<d_\eta(\bm{X})<\frac{\eta}{2}\}}(\frac{|\bm{u}|^2}{2}+p)
\bm{U}(t,\bm{X})\cdot
\bm{N}(\sigma(\bm{X}))\frac{1}{\eta}\phi'd\bm{X} dt.
\end{align}
From the statement of the theorem, recall
\begin{equation*}
O=\left\{\frac{\eta}{4}<d_\eta(\bm{X})<\frac{\eta}{2}\right\}
\setminus\left\{\left[S_{2\eta}\times
\left[\left(\frac{5}{4}\eta,\frac{3}{2}\eta\right)\cup
\left(1-\frac{3}{2}\eta,1-\frac{5}{4}\eta\right)\right]
\right]
\cup
\left[(S_{\frac{5}{4}\eta}
\setminus S_{\frac{3}{2}\eta})\times
(2\eta,1-2\eta)\right]
\right\}.
\end{equation*}
Obviously, $|O|=O(\eta^2)$ and $\|\nabla\psi_{\eta}(\bm{X})\|
\le 10\eta^{-1}$ for $\bm{X}\in O$.
Taking the limit, we obtain
\begin{align*}
&\lim_{\eta\to 0+}\left|
\int^{t_2}_{t_1}\int_{\{\frac{\eta}{4}<d_\eta(\bm{X})<\frac{\eta}{2}\}}(\frac{|\bm{u}|^2}{2}+p)
\bm{U}(t,\bm{X})\cdot
\bm{N}(\sigma(\bm{X}))\frac{1}{\eta}\phi'd\bm{X} dt
\right|\nonumber\\
\le&\lim_{\eta\to 0+}10\eta^{-1}\left\{
\int^{t_2}_{t_1}\int_{S_{\frac{5}{4}\eta}\setminus S_{\frac{3}{2}\eta}}
\int_{2\eta}^{1-2\eta}\left|(\frac{|\bm{u}|^2}{2}+p)\bm{u}\cdot
\bm{n}(\bm{\sigma}(\bm{X}))\right|dz
d\bm{x}dt+\int^{t_2}_{t_1}\int_{S_{2\eta}}
\int_{I_\eta}\left|(\frac{|\bm{u}|^2}{2}+p)w\right|
dz d\bm{x}dt\right.\nonumber\\
&\left.
+\int^{t_2}_{t_1}\int_{O}\left|(\frac{|\bm{u}|^2}{2}+p)|\bm{U}|\right|
d\bm{X}dt
\right\}\nonumber\\
\le&10\lim_{\eta\to 0+}\eta^{-1}\left\{
\int^{t_2}_{t_1}\int_{[S_{\frac{5}{4}\eta}\setminus S_{\frac{3}{2}\eta}]
\times[0,1]}\left|(\frac{|\bm{u}|^2}{2}+p)\bm{u}\cdot
\bm{n}(\bm{\sigma}(\bm{X}))\right|
d\bm{X}dt+\int^{t_2}_{t_1}\int_{S\times I_\eta}
\left|(\frac{|\bm{u}|^2}{2}+p)w\right|
d\bm{X}dt\right\}\nonumber\\
&+O(\eta^{3\mu_2-1})+O(\eta^{\mu_1+\mu_2-1})=0.
\end{align*}
\qed


\section{Vanishing Viscosity Limit}\label{S5}
The inviscid limit problem in a domain with physical boundary is notoriously difficult due to the possible formation of a turbulent boundary layer. In general, it is unknown whether a sequence of Leray-Hopf solutions will converge to a solution of the inviscid problem in any sense.
One approach is to prove conditional results under a uniform assumption on the viscosity sequence,
as in Bardos et al.~\cite{b2}, who establish the viscosity limit and exclude anomalous dissipation of the limit solution under the assumption that the sufficient conditions for energy conservation hold uniformly in viscosity.

Although there has been much progress in the past years concerning the vanishing viscosity of limit of Navier-Stokes equations, few result address the PE system. To the authors' knowledge, only Kukavica et al.~\cite{k} consider the zero viscosity limit for analytic solutions in a bounded domain. By virtue of the aforementioned discussion, we can give a sufficient condition of vanishing viscosity for the PE system.

\begin{theorem}\label{t5.1}
Let $(\mathbf u_\nu, w_\nu,p_\nu)$ be a family of Leray-Hopf weak solutions of the viscous PE system
\begin{equation}\label{5.1}
\left\{
\begin{array}{llll}
\partial_{t}\mathbf u_\nu+\operatorname{div}_{\bf x}(\mathbf{u}_\nu\otimes\mathbf{u}_\nu)+\partial_z(\mathbf u_\nu w_\nu)+\nabla_xp_\nu
=\nu\Delta_{\mathbf x}\mathbf u+\nu\partial_{zz}\mathbf u, \\
\partial_zp_\nu=0,\\
\operatorname{div}_{\bf x}\mathbf u_\nu+\partial_zw_\nu=0,
\end{array}\right.
\end{equation}
with initial data $\mathbf u_0$ (independent of $\nu$) and boundary conditions
\begin{align}
\mathbf u_\nu|_{\Gamma_s\cup\Gamma_b}=0, \hspace{5pt}w|_{\Gamma_t}=0,
\hspace{5pt}\nu\partial_z\mathbf u_\nu|_{\Gamma_t}=0.
\end{align}
Moreover, we assume $\mathbf u_\nu$ satisfies the hypotheses of Theorem~\ref{t2.2} uniformly in $\nu$, specifically:

1. There exists some $\eta_0>0$, an open subset $V_{\eta_0}=\{\mathbf X\in\Omega: d(\mathbf X,\partial\Omega)<\eta_0\}$ and $\theta<\infty$ such that
\begin{align}\label{5.3}
\sup_{\nu>0}\|p_\nu\|_{L^{\frac{3}{2}}\big{(}(0,T);H^{-\theta(V_{\eta_0})}(V_{\eta_0})\big{)}}<\infty;
\end{align}

2. For any $(t_1,t_2)\times\widetilde{\Omega}\subset(0,T)\times\Omega$, $\alpha\in (1/2,1)$ and {\color{blue} $\beta\in (1/2,1)$ } such that $2\operatorname{min}\{\alpha,\beta\}+\operatorname{max}\{\alpha,\beta\}>2$ with the property
\begin{align}\label{5.4}
\sup_{\nu>0}\int^{t_2}_{t_1}\|\mathbf{u}_\nu\|^3_{C^{0,\alpha}_{\mathbf x}C^{0,\beta}_{z}(\overline{\widetilde{\Omega}})}dt\leq M(\widetilde{\Omega})<\infty;
\end{align}

3.
\begin{align}\label{5.5}
\lim_{\eta\rightarrow0}\sup_{\nu>0}\int^T_0\frac{1}{\eta}\left\{
\int_{[S_{\frac{5}{4}\eta}\setminus S_{\frac{3}{2}\eta}]
\times[0,1]}\left|(\frac{|\bm{u}_\nu|^2}{2}+p_\nu)\bm{u}_\nu\cdot
\bm{n}(\bm{\sigma}(\bm{X}))\right|
d\bm{X}+\int_{S\times I_\eta}
\left|(\frac{|\bm{u}_\nu|^2}{2}+p_\nu)w_\nu\right|
d\bm{X}
\right\}dt=0;
\end{align}

4. There exist some $\mu_1,\mu_2\in\mathbb{R}$ such that
\begin{align}\label{auxest}
&\sup_{\nu>0}\|p_\nu\|_{L^\frac{3}{2}((0,T)\times\Gamma_\eta)}\leq C \eta^{\mu_1},\hspace{8pt}
\sup_{\nu>0}\|\bm{U}_\nu\|_{L^3((0,T)\times \Gamma_\eta)}\leq C\eta^{\mu_2},\nonumber\\
&\mu_1+\mu_2>1,\ \mu_2>\frac{1}{3}.
\end{align}

Then (extracting a subsequence $\nu$ if necessary) $\mathbf u_\nu$ converges weakly* in $L^\infty((0,T);L^2(\Omega))$ to a function $\overline{\mathbf u}_\nu\in C_{weak}([0,T);L^2(\Omega))$ which is a weak solution of PE system \eqref{5.1} with the same initial data and which also satisfies the hypotheses of Theorem 2.2.  Moreover, there is no anomalous energy dissipation in the vanishing viscosity limit, i.e., for every $T^\star\in(0,T)$ one has
\begin{align}\label{vandiss}
\lim_{\nu\rightarrow0}\nu\int^{T^\star}_0\int_{\Omega}|\nabla_{\mathbf x}\mathbf u_\nu|^2+|\partial_{zz}\mathbf u_\nu|^2d\mathbf xdzdt=0.
\end{align}

\end{theorem}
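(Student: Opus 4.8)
The plan is to follow the strategy of Bardos, Titi and Wiedemann \cite{b2}: extract a weak-$*$ limit from the uniform energy bound, upgrade it to strong convergence in the interior so as to pass to the limit in the nonlinearities and identify the limit as a weak solution fulfilling the hypotheses of Theorem~\ref{t2.2}, and finally combine the viscous energy inequality with the energy \emph{equality} for the limit (supplied by Theorem~\ref{t2.2}) to squeeze out the dissipation. First I would record the uniform a priori bounds. Testing \eqref{5.1} against $\mathbf u_\nu$ and using that the boundary conditions are chosen precisely so that the boundary terms vanish, one obtains the Leray--Hopf energy inequality
\begin{align}\label{enineq}
\frac12\|\mathbf u_\nu(t)\|^2_{L^2(\Omega)}+\nu\int_0^t\int_\Omega\left(|\nabla_{\mathbf x}\mathbf u_\nu|^2+|\partial_z\mathbf u_\nu|^2\right)d\mathbf xdzds\leq\frac12\|\mathbf u_0\|^2_{L^2(\Omega)}
\end{align}
for a.e. $t\in(0,T)$. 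In particular $\mathbf u_\nu$ is bounded in $L^\infty((0,T);L^2(\Omega))$ uniformly in $\nu$, so a subsequence converges weak-$*$ to some $\overline{\mathbf u}\in L^\infty((0,T);L^2(\Omega))$ by Banach--Alaoglu.

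Next I would establish strong convergence in the interior. Fix $\widetilde Q=(t_1,t_2)\times\widetilde\Omega\subset\subset(0,T)\times\Omega$. Hypothesis \eqref{5.4} together with \eqref{enineq} bounds $\mathbf u_\nu$ uniformly in $L^3((t_1,t_2);C^{0,\alpha}_{\mathbf x}C^{0,\beta}_z(\overline{\widetilde\Omega}))$ --- the sup-norm missing from the seminorm \eqref{2.12} being recovered from the $L^2$ bound --- a space compactly embedded in the spatial variables. To gain compactness in time I would bound $\partial_t\mathbf u_\nu$ in a negative Sobolev norm over $\widetilde Q$ directly from \eqref{5.1}: the advective terms are controlled by \eqref{5.4}, the pressure gradient by Proposition~\ref{pressureest} via \eqref{5.3}, and the viscous term tends to zero because $\nu\Delta\mathbf u_\nu=\sqrt\nu\,\mathrm{div}(\sqrt\nu\,\nabla\mathbf u_\nu)$ with $\sqrt\nu\,\nabla\mathbf u_\nu$ bounded in $L^2$ by \eqref{enineq}. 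An Aubin--Lions--Simon argument then yields, along a further subsequence, $\mathbf u_\nu\to\overline{\mathbf u}$ strongly in $L^3((t_1,t_2);L^2(\widetilde\Omega))$, which suffices to pass to the limit in the horizontal flux $\mathbf u_\nu\otimes\mathbf u_\nu$.

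The hard part is the vertical nonlinearity $\partial_z(\mathbf u_\nu w_\nu)$, because $w_\nu$ has no uniform $L^2$ bound (the Leray--Hopf estimate only controls it as $O(\nu^{-1/2})$). Interpolating the uniform bound \eqref{5.4} against the interior strong $L^2$ convergence upgrades $\mathbf u_\nu\to\overline{\mathbf u}$ to strong convergence in $L^3((t_1,t_2);C^{0,\alpha'}_{\mathbf x}C^{0,\beta'}_z(\overline{\widetilde\Omega}))$ for any $\alpha'<\alpha$, $\beta'<\beta$. Writing $w_\nu(\mathbf x,z)=-\int_0^z\mathrm{div}_{\mathbf x}\mathbf u_\nu(\mathbf x,s)\,ds$, the vertical velocity is then uniformly bounded, and convergent, in an anisotropic space of negative horizontal regularity $C^{\alpha-1}_{\mathbf x}$; because $\alpha>1/2$ one may choose $\alpha'>1/2$, so the horizontal regularity of $\mathbf u_\nu$ and the negative regularity of $w_\nu$ add up positively and the product pairing is continuous. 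Hence $\mathbf u_\nu w_\nu\to\overline{\mathbf u}\,\overline w$ in $\mathcal D'(\widetilde Q)$, the nonlocal vertical integral being handled exactly as in the proof of Theorem~\ref{t2.1}, cf. the commutator \eqref{3.14}. Together with the vanishing of the viscous term this identifies the triple $(\overline{\mathbf u},\overline w,\overline p)$ as a weak solution of \eqref{1.1}, and the uniform $\partial_t$ bound gives $\overline{\mathbf u}\in C_{\mathrm{weak}}([0,T);L^2(\Omega))$.

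Finally I would check that the limit inherits the hypotheses of Theorem~\ref{t2.2} and conclude. Every uniform-in-$\nu$ bound passes to the limit by weak lower semicontinuity: \eqref{5.4} yields \eqref{2.17}, \eqref{5.3} yields \eqref{2.15}, and the boundary integrals \eqref{5.5}, \eqref{auxest} yield \eqref{2.16}, \eqref{2.18} via Fatou's lemma. Theorem~\ref{t2.2} then gives $\|\overline{\mathbf u}(t)\|_{L^2(\Omega)}=\|\mathbf u_0\|_{L^2(\Omega)}$ for every $t$. On the other hand, weak lower semicontinuity of the $L^2$ norm and \eqref{enineq} give
\begin{align*}
\|\mathbf u_0\|^2_{L^2(\Omega)}=\|\overline{\mathbf u}(t)\|^2_{L^2(\Omega)}\leq\liminf_{\nu\to0}\|\mathbf u_\nu(t)\|^2_{L^2(\Omega)}\leq\limsup_{\nu\to0}\|\mathbf u_\nu(t)\|^2_{L^2(\Omega)}\leq\|\mathbf u_0\|^2_{L^2(\Omega)},
\end{align*}
whence $\|\mathbf u_\nu(t)\|_{L^2(\Omega)}\to\|\mathbf u_0\|_{L^2(\Omega)}$. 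Inserting this into \eqref{enineq} at $t=T^\star$ forces $\nu\int_0^{T^\star}\int_\Omega(|\nabla_{\mathbf x}\mathbf u_\nu|^2+|\partial_z\mathbf u_\nu|^2)\,d\mathbf xdzdt\to0$, which is \eqref{vandiss}, i.e. the absence of anomalous dissipation.
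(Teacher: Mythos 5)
Your proposal is correct and follows essentially the same strategy as the paper: uniform energy inequality plus Banach--Alaoglu for the weak-$*$ limit, an Aubin--Lions argument on an exhaustion of $\Omega$ by compact subsets to get interior strong convergence and identify the limit as a weak solution, verification that the hypotheses of Theorem~\ref{t2.2} persist in the limit, and finally the squeeze between the viscous energy inequality and the energy equality of the limit to kill the dissipation term. Two points where you diverge are worth noting. First, for the vertical flux $\partial_z(\mathbf u_\nu w_\nu)$ the paper simply asserts that $w_{\nu_j}$ converges weakly in $L^2((0,T)\times\Omega)$ and pairs this with the strong interior convergence of $\mathbf u_{\nu_j}$; you instead observe (correctly) that the Leray--Hopf estimate alone gives no uniform $L^2$ bound on $w_\nu$ and substitute a product argument pairing $\mathbf u_\nu\in C^{0,\alpha'}_{\mathbf x}$ against $w_\nu$ of negative horizontal regularity $\alpha-1$, using $\alpha>1/2$ --- this is arguably more careful than the paper's treatment, though you should make explicit in which topology the nonlocal object $w_\nu=-\int_0^z\operatorname{div}_{\mathbf x}\mathbf u_\nu\,ds$ actually converges. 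Second, for transferring the boundary hypotheses \eqref{5.5} and \eqref{auxest} to the limit you invoke Fatou, whereas the paper runs an equicontinuity/Arzel\`a--Ascoli argument based on Proposition~\ref{pressureest} to get pointwise convergence of the time-integrated flux densities; these amount to the same thing, since Fatou also requires the a.e.\ convergence of the nonlinear integrand that the equicontinuity supplies. Your explicit final step (weak lower semicontinuity at fixed $t$ combined with $\limsup_{\nu}\|\mathbf u_\nu(t)\|_{L^2}\le\|\mathbf u_0\|_{L^2}$) is exactly the argument the paper defers to Bardos--Titi--Wiedemann.
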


\begin{proof}
Since $\mathbf u_\nu$ is a Leray-Hopf weak solution of the viscous PE system \eqref{5.1}, it satisfies the energy inequality:
\begin{align}\label{5.6}
\frac{1}{2}\int_\Omega\|\mathbf u_\nu(t)\|^2_{L^2}d\mathbf xdz+\nu\int^{t}_0\int_{\Omega}\big{(}|\nabla_{\mathbf x}\mathbf u_\nu|^2+|\partial_{zz}\mathbf u_\nu|^2\big{)}d\mathbf xdzds
\leq\frac{1}{2}\int_\Omega\|\mathbf u_0\|^2_{L^2}d\mathbf xdz.
\end{align}
From the Banach-Alaoglu Theorem, there exists a subsequence $\mathbf u_{\nu_j}$ in $L^\infty((0,T);L^2(\Omega))$ converging to $\overline{\mathbf u_\nu}\in C_{weak}([0,T);L^2(\Omega))$, and $w_{\nu_j}$ converges weakly to $\overline{w_{\nu}}$ in $L^2((0,T);L^2(\Omega))$. Therefore, it is easy to deduce
\begin{align}\label{5.7}
&\text{div}_{\mathbf x}\overline{\mathbf u_\nu}+\partial_z\overline{w_\nu}=0\hspace{5pt} \text{in}\hspace{3pt} (0,T)\times{\Omega},\nonumber\\
&\overline{\mathbf u_\nu}\cdot\mathbf n|_{\Gamma_s}=0,\hspace{3pt}\overline{w_\nu}|_{\Gamma_b\cup\Gamma_t}=0,\nonumber\\
&\lim_{t\rightarrow0}\int_\Omega\overline{\mathbf u_\nu(t)}\phi(\mathbf X)d\mathbf X=\int_\Omega\mathbf u_0(\mathbf X)\phi(\mathbf X)d\mathbf X\quad\quad \text{for all $\phi\in L^2(\Omega)$.}
\end{align}

From the energy inequality \eqref{5.6}, we get $\|\overline{\mathbf u_\nu}(t)\|_{L^2}\leq\|\mathbf u_0\|_{L^2}$ for $t\geq0$. By virtue of weak lower semicontinuity of the norm and \eqref{5.7}, we get
\begin{align}\label{5.8}
\lim_{t\rightarrow0}\|\overline{\mathbf u_\nu}(t)-\mathbf u_0\|_{L^2(\Omega)}=0.
\end{align}

For any $\widetilde{\Omega}\subset\subset{\Omega}$, we have $\mathbf u_\nu\in L^3((0,T);C^{0,\alpha}_{\mathbf x}C^{0,\beta}_z)$ due to \eqref{5.4}. Thanks to the momentum equation $\eqref{5.1}$
and \eqref{5.6}, one deduces $\partial_t\mathbf u_\nu\in L^{\frac{3}{2}}((0,T);H^{-1}(\widetilde{\Omega}))$. Therefore, we can prove up to a subsequence (which we denote as $\mathbf u_{\nu_j}$) $\mathbf u_{\nu_j}\rightarrow\overline{\mathbf u_\nu}$ in $L^2((0,T)\times\widetilde{\Omega})$ by virtue of the Aubin-Lions Lemma. Then we choose open subsets $\widetilde{\Omega_k}\subset\widetilde{\Omega_{k+1}}$ such that $\cup\widetilde{\Omega_k}=\Omega$ and extract a diagonal subsequence, which converges in $L^2((0,T)\times\Omega)$ to the limiting function $\overline{\mathbf u_\nu}$. Moreover, we know $w_{\nu_j}\rightharpoonup\overline{w_\nu}$ in $L^2((0,T)\times\Omega)$. Therefore, $(\overline{\mathbf u_\nu},\overline{w_\nu})$ is a solution of the PE system.

In the following, we need to prove the solution $\overline{\mathbf u_\nu},\overline{w_\nu}$ satisfies:
\begin{align}\label{5.9}
\lim_{\eta\rightarrow0}\int^T_0\frac{1}{\eta}\left\{
\int_{[S_{\frac{5}{4}\eta}\setminus S_{\frac{3}{2}\eta}]
\times[0,1]}\left|(\frac{|\overline{\bm{u}_\nu}|^2}{2}+\overline{p_\nu})\overline{\bm{u}_\nu}\cdot
\bm{n}(\bm{\sigma}(\bm{X}))\right|
d\bm{X}+\int_{S\times I_\eta}
\left|(\frac{|\overline{\bm{u}_\nu}|^2}{2}+\overline{p_\nu})\overline{w_\nu}\right|
d\bm{X}
\right\}dt=0.
\end{align}
Utilizing \eqref{5.4} and Proposition \ref{pressureest}, we get
\begin{align*}
\int^T_0\left|(\frac{|\mathbf u_\nu|^2}{2}+p_\nu)\mathbf u_\nu(t,\mathbf X)\mathbf n(\sigma(\mathbf X))
\right|+\left|(\frac{|\mathbf u_\nu|^2}{2}+p_\nu)w_\nu\right| dt
\end{align*}
is equicontinuous with respect to $\mathbf X$ in every compact subset of $\Omega$. Then for every $\eta>0$ and for every $\mathbf X$ satisfying $\frac{\eta}{4}<d(\mathbf X)<\frac{\eta}{2}$, we have

\begin{align}\label{5.10}
\lim_{\nu_j\rightarrow0}&\int^T_0\left|(\frac{|\mathbf u_{\nu_j}|^2}{2}+p_{\nu_j})\mathbf u_{\nu_j}(t,\mathbf X)\mathbf n(\sigma(\mathbf X))
\right|+\left|(\frac{|\mathbf u_{\nu_j}|^2}{2}+p_{\nu_j})w_{\nu_j}\right|dt\nonumber\\
&=\int^T_0\left|(\frac{|\overline{\mathbf u_{\nu}}|^2}{2}+\overline{p_{\nu}})\overline{\mathbf u_{\nu}}(t,\mathbf x)\mathbf n(\sigma(\mathbf x))
\right|+\left|(\frac{|\overline{\mathbf u_{\nu}}|^2}{2}+\overline{p_{\nu}})\overline{w_{\nu}}\right|dt,
\end{align}
where we used the Arzel\`a-Ascoli Theorem after passing to a diagonal subsequence if necessary.

It is obvious to obtain from \eqref{5.10}
\begin{align*}
\lim_{\nu_j\rightarrow0}&\int^T_0\frac{1}{\eta}\left\{\int_{[S_{\frac{5}{4}\eta}\setminus S_{\frac{3}{2}\eta}]
\times[0,1]}
|(\frac{|\mathbf u_{\nu_j}|^2}{2}+p_{\nu_j})\mathbf u_{\nu_j}(t,\mathbf x)\mathbf n(\sigma(\mathbf x))|d\mathbf X
+\int_{S\times I_\eta}|(\frac{|\mathbf u_{\nu_j}|^2}{2}+p_{\nu_j})w_{\nu_j}| d\mathbf X\right\} dt\\
&=\int^T_0\frac{1}{\eta}\left\{\int_{[S_{\frac{5}{4}\eta}\setminus S_{\frac{3}{2}\eta}]
\times[0,1]}
|(\frac{|\overline{\mathbf u_{\nu}}|^2}{2}+\overline{p_{\nu}})\overline{\mathbf u_{\nu}}(t,\mathbf x)\mathbf n(\sigma(\mathbf x))|d\mathbf X
+\int_{S\times I_\eta}|(\frac{|\overline{\mathbf u_{\nu}}|^2}{2}+\overline{p_{\nu}})\overline{w_{\nu}}| d\mathbf X\right\} dt.
\end{align*}
Therefore, we can use \eqref{5.5} to imply \eqref{5.9}. The properties~\eqref{auxest} for the limit solution can be established similarly, so that the latter satisfies the hypotheses of Theorem~\ref{t2.2} and thus conserves the energy. The remaining conclusion \eqref{vandiss} then follows analogously to \cite{b2}.

\end{proof}



\vskip 0.5cm
\noindent {\bf Acknowledgements}

\vskip 0.1cm
The research of  \v{S}. Ne\v{c}asov\'{a} has been supported by the Czech Science Foundation (GA\v CR) project 22-01591S and Praemium Academiae of \v S. Ne\v casov\' a. T. Tang is partially supported by NSFC No. 12371246, NSF of Jiangsu Province Grant No. BK20221369 and Qing Lan Project of Jiangsu Province. E. Wiedemann is supported by the DFG Priority Programme SPP 2410 (project number 525716336).
L. Zhu is partially supported by NSFC No. 12001162 and the Fundamental Research Funds for the Central University No. 2013/B220202082. T.~Tang wishes to thank the University of Erlangen-N\"{u}rnberg for their hospitality during his stay in Germany. In turn, E.~Wiedemann expresses his gratitude to Yangzhou University for hosting him.



\begin{thebibliography}{99}









\bibitem{b2} C. Bardos, E. S. Titi and E. Wiedemann, Onsager's conjecture with physical boundaries and an application to the vanishing viscosity limit, {\em Comm. Math. Phys.}, 370 (2019), 291-310.




\bibitem{bo} D. Boutros, S. Markfelder and E. S. Titi, On energy conservation for the hydrostatic Euler equations: an Onsager conjecture, {\em  Calc. Var. Partial Differential Equations}, 62 (2023), Paper No. 219, 40 pp.



\bibitem{bu}T. Buckmaster, C. De Lellis, L. Sz\'{e}kelyhidi, Jr. and V. Vicol, Onsager's conjecture for admissible weak solutions, {\em Comm. Pure Appl. Math.}, 72 (2019), 229-274.


\bibitem{c1} C. S. Cao and E. S. Titi, Global well-posedness of the three-dimensional viscous primitive equations of large scale ocean and atmosphere dynamics, {\em Ann. of Math.}, 166 (2007), 245-267.





\bibitem{ch} A. Cheskidov, P. Constantin, S. Friedlander, and R. Shvydkoy, Energy conservation and Onsager's conjecture for the Euler equations, {\em Nonlinearity}, 21 (2008), 1233-1252.


\bibitem{chi} E. Chiodaroli and M. Mich\'{a}lek, Existence and Non-uniqueness of Global Weak Solutions to Inviscid Primitive and Boussinesq Equations, {\em Comm. Math. Phys.},  353 (2017),  1201-1216.


\bibitem{co} P. Constantin, W. E, and E. S. Titi, Onsager's conjecture on the energy conservation for solutions of Euler's equation, {\em Comm. Math. Phys.}, 165 (1994), 207-209.






\bibitem{de1} C. De Lellis and L. Sz\'{e}kelyhidi, Jr., On admissibility criteria for weak solutions of the euler equations, {\em Arch. Ration. Mech. Anal.}, 195 (2010), 225-260.

\bibitem{de2} C. De Lellis and L. Sz\'{e}kelyhidi, Jr., Dissipative continuous Euler flows, {\em Invent. Math.}, 193 (2013), 377--407.




\bibitem{dr} T. D. Drivas and H. Q. Nguyen, Onsager's conjecture and anomalous dissipation on domains with boundary, {\em SIAM J. Math. Anal.}, 50 (2018), 4785-4811.



\bibitem{du} J. Duchon and R. Robert, Inertial energy dissipation for weak solutions of incompressible Euler and Navier-Stokes equations, {\em Nonlinearity}, 13 (2000), 249-255.





\bibitem{ey} G. L. Eyink, Energy dissipation without viscosity in ideal hydrodynamics, I: Fourier analysis and local energy transfer, {\em Phys. D}, 78 (1994), 222-240.





%
%






\bibitem{e4} E. Feireisl, P. Gwiazda, A. \'Swierczewska-Gwiazda, and E. Wiedemann, Regularity and energy conservation for the compressible Euler equations, {\em Arch. Ration. Mech. Anal.}, 223 (2017), 1375-1395.

\bibitem{f} U. S. Fjordholm and E. Wiedemann, Statistical solutions and Onsager's conjecture, {\em Phys. D}, 376/377 (2018), 259-265.

\bibitem{foote} R.~L.~Foote, Regularity of the distance function, {\em Proc. Amer. Math. Soc.}, 92 (1984), 153-155.











\bibitem{gu} F. Guill\'{e}n-Gonz\'{a}lez, N. Masmoudi and M. A. Rodr\'{\i}guez-Bellido, Anisotropic estimates and strong solutions of the primitive equations, {\em Differential Integral Equations}, 14 (2001),  1381-1408.


\bibitem{h} M. Hieber and T. Kashiwabara, Global strong well-posedness of the three dimensional primitive equations in $L^p$-spaces, {\em Arch. Ration. Mech. Anal.}, 221 (2016), 1077-1115.

\bibitem{i} P. Isett, A proof of Onsager's conjecture, {\em Ann. of Math.}, 188 (2018), 871-963.


\bibitem{k} I. Kukavica, M. C. Lombardo and M. Sammartino, Zero viscosity limit for analytic solutions of the primitive equations, {\em Arch. Ration. Mech. Anal.}, 222 (2016), 15-45.

\bibitem{lsh} T.~M.~Leslie and R.~Shvydkoy, The energy balance relation for weak solutions of the density-dependent Navier-Stokes equations, {\em J.~Differential Equations}, 261 (2016), 3719-3733.

\bibitem{li} H. L. Li and C. C. Liang, Global existence and large-time behavior for primitive equations with the free boundary, {\em Sci. China Math.}, 67 (2024), 2303-2330.


\bibitem{l1} J. L. Lions, R. Temam and S. H. Wang, On the equations of the large-scale ocean, {\em Nonlinearity}, 5 (1992), 1007-1053.

\bibitem{l2} J. L. Lions, R. Temam and S. H. Wang, New formulations of the primitive equations of atmosphere and applications, {\em Nonlinearity}, 5 (1992), 237-288.






\bibitem{ro} J. C. Robinson, J. L. Rodrigo and J. W. D. Skipper, Energy conservation for the Euler equations on $\mathbb{T}^2\times\mathbb{R}_{+}$ for weak solutions, {\em Asymptot. Anal.}, 110 (2018), 185-202.




\bibitem{sc} V. Scheffer, An inviscid flow with compact support in space-time, {\em J. Geom. Anal.}, 3 (1993), 343--401.

\bibitem{sh} A. Shnirelman, Weak solutions with decreasing energy of incompressible Euler equations, {\em Commun. Math. Phys.}, 210 (2000), 541-603.












\end{thebibliography}
\end{document}